\providecommand{\algorithmname}{Algorithm}
\numberwithin{equation}{section}
\numberwithin{figure}{section}
\theoremstyle{plain}
\newtheorem{thm}{\protect\theoremname}
  \theoremstyle{plain}
  \newtheorem{lem}[thm]{\protect\lemmaname}
  \theoremstyle{plain}
  \newtheorem{prop}[thm]{\protect\propositionname}
  \theoremstyle{remark}
  \newtheorem*{rem*}{\protect\remarkname}
\let\tempone\itemize
\let\temptwo\enditemize
\renewenvironment{itemize}{\tempone\addtolength{\itemsep}{0.3\baselineskip}}{\temptwo}
  \providecommand{\lemmaname}{Lemma}
  \providecommand{\propositionname}{Proposition}
  \providecommand{\remarkname}{Remark}
\providecommand{\theoremname}{Theorem}
\begin{document}

\title{Permutation polynomials of degree $8$ over finite fields of characteristic
$2$}

\author{Xiang Fan}

\begin{abstract}
Up to linear transformations, we obtain a classification of permutation
polynomials (PPs) of degree $8$ over $\mathbb{F}_{2^{r}}$ with $r>3$.
By {[}J. Number Theory 176 (2017) 46\textendash 66{]}, a polynomial
$f$ of degree $8$ over $\mathbb{F}_{2^{r}}$ is exceptional if and
only if $f-f(0)$ is a linearized PP. So it suffices to search for
non-exceptional PPs of degree $8$ over $\mathbb{F}_{2^{r}}$, which
exist only when $r\leqslant9$ by a previous result. This can be exhausted
by the SageMath software running on a personal computer. To facilitate
the computation, some requirements after linear transformations and
explicit equations by Hermite\textquoteright s criterion are provided
for the polynomial coefficients. The main result is that a non-exceptional
PP $f$ of degree $8$ over $\mathbb{F}_{2^{r}}$ (with $r>3$) exists
if and only if $r\in\{4,5,6\}$, and such $f$ is explicitly listed
up to linear transformations.
\end{abstract}

\keywords{Permutation polynomial; Exceptional polynomial; Hermite's criterion; SageMath}

\subjclass[2000]{11T06, 12Y05}

\maketitle

\section{Introduction}

Let $\mathbb{F}_{q}$ denote the finite field of characteristic $p$
and order $q=p^{r}$, and let $\mathbb{F}_{q}^{*}=\mathbb{F}_{q}\backslash\{0\}$.
Reserve the letter $x$ for the indeterminate of the polynomial ring
$\mathbb{F}_{q}[x]$ with coefficients in $\mathbb{F}_{q}$. We call
$f\in\mathbb{F}_{q}[x]$ a \emph{permutation polynomial} (PP) over
$\mathbb{F}_{q}$ if the induced map $a\mapsto f(a)$ permutes $\mathbb{F}_{q}$.
Initiated by Hermite \citep{Hermite1863sur}  and Dickson \citep{Dickson1897analytic}
in the 19th century, the study of PPs has drawn much attention, with
more and more classes of PPs (with either nice appearance or certain
desired properties) found or constructed. Some classes of them have
significant applications in wide areas of mathematics and engineering
such as cryptography, coding theory, combinatorial designs. However,
the non-trivial problems of classification of PPs (of certain prescribed
forms) are still challenging.

Especially, the classification of PPs of a given degree $d$ over
an arbitrary $\mathbb{F}_{q}$ is complete known only for $d\leqslant7$:
\begin{itemize}
\item by Dickson\textquoteright s 1896 thesis \citep{Dickson1897analytic}
for $d\leqslant5$ with any $q$, and for $d=6$ with any odd $q$;
\item by Li, Chandler and Xiang \citep{LiChandlerXiang2010permutation}
in 2010 for $d=6$ or $7$ with $q=2^{r}$ (for any $r\geqslant3$);
\item by the author's recent work \citep{Fan2019PP7} for $d=7$ with any
odd prime power $q$.
\end{itemize}
The present paper aims for a classification of PPs of degree $8$
over $\mathbb{F}_{2^{r}}$ (with any integer $r>3$) up to linear
transformations.

Similar to \citep{Fan2019PP7}, our approach here is based on some
known results on exceptional polynomials. In this paper, an \emph{exceptional
polynomial} over $\mathbb{F}_{q}$ is defined as a PP over $\mathbb{F}_{q}$
which is also a PP over $\mathbb{F}_{q^{m}}$ for infinitely many
integers $m\geqslant1$. Recall that $\mathbb{F}_{q}$ is of characteristic
$p$. A polynomial $\varphi$ in $\mathbb{F}_{q}[x]$ of the form
$\varphi(x)=\sum_{s=0}^{t}c_{s}x^{p^{s}}$ (with all $c_{s}\in\mathbb{F}_{q}$)
is called a \emph{linearized polynomial}. Clearly, a linearized polynomial
$\varphi$ induces a $\mathbb{F}_{p}$-linear map from $\mathbb{F}_{q}$
to itself, so $\varphi$ is a PP over $\mathbb{F}_{q}$ if and only
if $\mathbb{F}_{q}^{*}$ contains no root of $\varphi$. On the one
hand, a linearized PP $\varphi$ over $\mathbb{F}_{q}$ is always
exceptional. Indeed, let $\mathbb{F}_{q^{k}}$ be the splitting field
of $\varphi$ over $\mathbb{F}_{q}$, then every $\mathbb{F}_{q^{m}}^{*}$
with $\mathrm{gcd}(m,k)=1$ contains no root of $\varphi$. On the
other hand, the recent work of Bartoli, Giulietti, Quoos and Zini
\citep[Proposition 7.1]{BartoliGiuliettiQuoosZini2017complete} showed
that a polynomial $f\in\mathbb{F}_{2^{r}}[x]$ (with $r>3$) of degree
$8$ is exceptional if and only if $f(x)-f(0)$ is linearized and
has no root in $\mathbb{F}_{2^{r}}^{*}$. Also note an explicit criterion
\citep[\S\S58]{Dickson1897analytic} for a linearized polynomial over
$\mathbb{F}_{q}$ to have no root in $\mathbb{F}_{q}^{*}$. Combining
them, we get the following explicit  determination of exceptional
polynomials of degree $8$ over $\mathbb{F}_{2^{r}}$ (with $r>3$).
\begin{lem}
[{\citep[Proposition 7.1]{BartoliGiuliettiQuoosZini2017complete}
and \citep[\S\S58]{Dickson1897analytic}}]\label{lem:EP8} A polynomial
$f(x)=\sum_{i=0}^{8}a_{i}x^{i}\in\mathbb{F}_{2^{r}}[x]$ (with $r>3$,
all $a_{i}\in\mathbb{F}_{2^{r}}$ and $a_{8}\ne0$) is exceptional
over $\mathbb{F}_{2^{r}}$ if and only if $a_{7}=a_{6}=a_{5}=a_{3}=0$
(i.e. $f(x)-a_{0}$ is linearized) and $\det(c_{i-j}^{p^{j}})_{0\leqslant i,j\leqslant r-1}\neq0$,
where 
\[
(c_{0},c_{1},c_{2},c_{3})=(c_{-r},c_{-r+1},c_{-r+2},c_{-r+3})=(a_{1},a_{2},a_{4},a_{8})
\]
and all other $c_{s}=0$.
\end{lem}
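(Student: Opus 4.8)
The statement is a transcription of the two quoted facts into explicit conditions on the coefficients, so the plan is to assemble them carefully rather than to prove something new. \emph{Step 1 (reduce to a linearized permutation polynomial).} First I would note that adding a constant changes neither whether a polynomial is a PP over a given field nor, consequently, whether it is exceptional, so $f$ is exceptional over $\mathbb{F}_{2^{r}}$ if and only if $g(x):=f(x)-a_{0}=f(x)-f(0)$ is. By the elementary direction recalled above, every linearized PP over $\mathbb{F}_{2^{r}}$ is exceptional, and by \citep[Proposition 7.1]{BartoliGiuliettiQuoosZini2017complete} (also quoted above) the converse holds in degree $8$ when $r>3$. Hence $f$ is exceptional over $\mathbb{F}_{2^{r}}$ exactly when $g$ is a linearized polynomial with no root in $\mathbb{F}_{2^{r}}^{*}$; and since $g$ induces an $\mathbb{F}_{p}$-linear (here $p=2$) endomorphism of the finite set $\mathbb{F}_{2^{r}}$, the condition that $g$ have no root in $\mathbb{F}_{2^{r}}^{*}$ is equivalent to $g$ being a PP over $\mathbb{F}_{2^{r}}$. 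It then remains to express the condition that $g$ be linearized and the condition that $g$ have only the trivial root in $\mathbb{F}_{2^{r}}$ as equations in the $a_{i}$.

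\emph{Step 2 (shape of a linearized polynomial in characteristic $2$).} Next I would unwind the requirement that $g$ be linearized. A linearized polynomial over $\mathbb{F}_{2^{r}}$ is an $\mathbb{F}_{2^{r}}$-linear combination of the monomials $x^{p^{s}}$; since $\deg g=\deg f=8=2^{3}$ and $g(0)=0$, the polynomial $g=\sum_{i=1}^{8}a_{i}x^{i}$ is linearized if and only if $a_{i}=0$ for every exponent $i\in\{1,\dots,8\}$ that is not a power of $2$, that is, for $i\in\{3,5,6,7\}$; equivalently $a_{7}=a_{6}=a_{5}=a_{3}=0$, which is the clause of the lemma reading ``$f(x)-a_{0}$ is linearized''. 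Under these conditions $g(x)=a_{1}x+a_{2}x^{2}+a_{4}x^{4}+a_{8}x^{8}=\sum_{s=0}^{3}c_{s}x^{p^{s}}$ with $(c_{0},c_{1},c_{2},c_{3})=(a_{1},a_{2},a_{4},a_{8})$, and $c_{3}=a_{8}\neq0$ because $\deg f=8$.

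\emph{Step 3 (Dickson's determinant criterion).} To decide when this $g$ has only the trivial root in $\mathbb{F}_{2^{r}}$ I would invoke \citep[\S\S58]{Dickson1897analytic}. The computation underlying that criterion is the following: given $\alpha\in\mathbb{F}_{2^{r}}$ with $g(\alpha)=0$, raise the equation to the $p^{j}$th power for $j=0,1,\dots,r-1$ and use $\alpha^{p^{r}}=\alpha$ to reduce every exponent of $p$ modulo $r$; this yields a homogeneous system of $r$ linear equations over $\mathbb{F}_{2^{r}}$ in the $r$ conjugates $\alpha,\alpha^{p},\dots,\alpha^{p^{r-1}}$ whose coefficient matrix is precisely $(c_{i-j}^{p^{j}})_{0\leqslant i,j\leqslant r-1}$, once the finite coefficient sequence is extended to have period $r$. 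This is exactly why the lemma records the coefficients $a_{1},a_{2},a_{4},a_{8}$ both at the indices $\{0,1,2,3\}$ and at their $(-r)$-shifts $\{-r,-r+1,-r+2,-r+3\}$ and sets all other $c_{s}=0$: within the index window $|i-j|\leqslant r-1$ that the $r\times r$ matrix actually sees, this periodic extension contributes the additional lower-left entries $c_{-r+1}=a_{2}$, $c_{-r+2}=a_{4}$, $c_{-r+3}=a_{8}$. By \citep[\S\S58]{Dickson1897analytic}, the matrix is nonsingular if and only if $g$ has no root in $\mathbb{F}_{2^{r}}^{*}$. Chaining Steps 1--3 gives the asserted equivalence.

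\emph{Where the care lies.} There is no real obstacle, since the two substantive inputs are quoted; the work is purely bookkeeping. The two points to get exactly right are: first, that the exponents in $\{1,\dots,8\}$ which are not powers of $2$ are precisely $3,5,6,7$; and second, that the index conventions of the determinant in \citep[\S\S58]{Dickson1897analytic} are faithfully matched, including the fact that (because $r>3$) the periodic wrap-around of $(c_{s})$ is not vacuous and genuinely supplies the entries just listed, so that the displayed determinant is literally the one obtained by eliminating the conjugates of $\alpha$.
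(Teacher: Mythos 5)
Your proposal is correct and follows essentially the same route as the paper, which likewise treats this lemma as a direct combination of \citep[Proposition 7.1]{BartoliGiuliettiQuoosZini2017complete} (exceptional of degree $8$ over $\mathbb{F}_{2^{r}}$, $r>3$, iff $f-f(0)$ is linearized with no root in $\mathbb{F}_{2^{r}}^{*}$) with Dickson's determinant criterion from \citep[\S\S58]{Dickson1897analytic}; your extra bookkeeping on which exponents must vanish and on the periodic extension of the $c_{s}$ is accurate and only fleshes out what the paper leaves implicit.
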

Thereafter, to complete the classification of PPs of degree $8$ over
$\mathbb{F}_{2^{r}}$ (with $r>3$), it suffices to search for the
non-exceptional ones. It is well-known that a non-exceptional PP of
degree $n$ exists over $\mathbb{F}_{q}$ only if $q\leqslant C_{n}$,
where $C_{n}$ stands for a constant depending only on $n$. The proof
of this fact can be found in \citep{DavenportLewis1963notes,Hayes1967geometric,Wan1987conjecture}
for abstract $C_{n}$, in \citep{Gathen1991values} for $C_{n}=n^{4}$,
in \citep{ChahalGhorpade2018Carlitz} for a bound less than $n^{2}(n-2)^{2}$,
and in \citep{Fan2019Weil} for 
\[
C_{n}=\left\lfloor \left(\dfrac{(n-2)(n-3)+\sqrt{(n-2)^{2}(n-3)^{2}+8n-12}}{2}\right)^{2}\right\rfloor ,
\]
where $\lfloor t\rfloor$ denotes the greatest integer not exceeding
a real number $t$. When $n=8$, the last bound is $925$, which indicates
the following lemma.
\begin{lem}
\label{lem:bound} A non-exceptional PP of degree $8$ exists over
$\mathbb{F}_{2^{r}}$ only if $r\leqslant9$.
\end{lem}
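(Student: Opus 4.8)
The plan is to specialise the general existence bound recalled just above to the case $n=8$, evaluate the resulting constant, and then convert the bound on the field size into a bound on $r$; no new idea is needed, so this is essentially a one-line corollary.

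First I would set $n=8$ in the explicit constant
\[
C_n = \left\lfloor \left( \frac{(n-2)(n-3) + \sqrt{(n-2)^2(n-3)^2 + 8n - 12}}{2} \right)^2 \right\rfloor
\]
from \citep{Fan2019Weil}. Here $(n-2)(n-3) = 30$ and $(n-2)^2(n-3)^2 + 8n - 12 = 900 + 52 = 952$, so $C_8 = \bigl\lfloor \bigl((30 + \sqrt{952})/2\bigr)^2 \bigr\rfloor$. Expanding the square gives $\bigl((30+\sqrt{952})/2\bigr)^2 = 463 + 15\sqrt{952}$, and since $30.8^2 = 948.64 < 952 < (463/15)^2$ one has $462 < 15\sqrt{952} < 463$, whence $C_8 = 925$.

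Next I would invoke the cited general fact that a non-exceptional PP of degree $n$ over $\mathbb{F}_q$ can exist only if $q \leq C_n$. Applying this with $q = 2^r$ and $n = 8$ yields $2^r \leq 925$. Since $2^9 = 512 \leq 925 < 1024 = 2^{10}$, this is equivalent to $r \leq 9$, which is the assertion of the lemma.

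The only step requiring any care is the numerical verification that the closed form for $C_8$ equals $925$ — or, more modestly, that it is strictly smaller than $1024$, which already suffices for the stated conclusion; everything else is immediate from the cited bound, so there is no real obstacle here.
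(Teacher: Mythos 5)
Your proposal is correct and follows exactly the route the paper intends: it specializes the explicit bound $C_n$ from \citep{Fan2019Weil} to $n=8$, verifies $C_8=925$, and concludes $2^r\leqslant 925<2^{10}$, hence $r\leqslant 9$. The paper itself gives no separate proof beyond the remark that the bound evaluates to $925$, so your computation simply makes that step explicit.
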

In this paper, we search for non-exceptional PPs of degree $8$ over
$\mathbb{F}_{2^{r}}$ (with $4\leqslant r\leqslant9$), with the help
of \href{http://www.sagemath.org}{SageMath} \citep{sagemath}, an
open-source computer algebra system with features covering many aspects
of mathematics, including algebra, calculus, combinatorics, graph
theory, number theory, numerical analysis and statistics. SageMath
uses a syntax resembling Python\textquoteright s, which is easy to
understand for readers without prior programming experience. We run
all algorithms in this paper on the version 8.6 of SageMath.

Most of our efforts are devoted to reduce the number of searching
candidates for non-exceptional PPs. The structure of this paper is
as follows. Section \ref{sec:trans} investigates linear transformations
of polynomials of degree $8$ over $\mathbb{F}_{2^{r}}$, and imposes
some constraints on the polynomial coefficients after linear transformations.
Section \ref{sec:HC} establishes Algorithm \ref{alg:HC} for explicit
equations on coefficients of PPs over $\mathbb{F}_{2^{r}}$, by Hermite\textquoteright s
criterion and a multinomial analogue of the Lucas theorem. Combining
constraints from Section \ref{sec:trans} and equations as outputs
of Algorithm \ref{alg:HC}, we analysis the polynomial coefficients
of non-exceptional PPs of degree $8$ in Section \ref{sec:Case},
on a case-by-case basis for $4\leqslant r\leqslant9$. It is verified
that all PPs of degree $8$ over $\mathbb{F}_{2^{r}}$ with $r\in\{7,8,9\}$
are exceptional. For $r\in\{4,5,6\}$, we write explicit SageMath
codes to test all remaining candidates for non-exceptional PPs of
degree $8$ over $\mathbb{F}_{2^{r}}$. We also rewrite the SageMath
outputs as Theorem \ref{thm:r=00003D4}, \ref{thm:r=00003D5}, and
\ref{thm:r=00003D6}, listing all non-exceptional PPs of degree $8$
over $\mathbb{F}_{2^{r}}$, up to linear transformations.

\section{\label{sec:trans}Linear Transformations}

To reduce the number of candidates in our search for non-exceptional
PPs of degree $8$ over $\mathbb{F}_{2^{r}}$, we consider the classification
up to linear transformations. We say that two polynomials $f$ and
$g$ in $\mathbb{F}_{q}[x]$ are \emph{related by linear transformations}
(\emph{linearly related} for short) if there exist $s,t\in\mathbb{F}_{q}^{*}$
and $u,v\in\mathbb{F}_{q}$ such that $g(x)=sf(tx+u)+v$. Linearly
related $f$ and $g$ hold the same degree, and $f$ is a (non-exceptional)
PP over $\mathbb{F}_{q}$ if and only if so is $g$.

Each PP of degree $8$ over $\mathbb{F}_{2^{r}}$ is linearly ralated
to some $f\in\mathbb{F}_{2^{r}}[x]$ in normalized form, i.e. $f(x)=x^{8}+\sum_{i=1}^{7}a_{i}x^{i}$
with all $a_{i}\in\mathbb{F}_{2^{r}}$. As we mentioned before, a
linearized PP must be exceptional. So $(a_{7},a_{6},a_{5},a_{3})\neq(0,0,0,0)$
for $f$ to be non-exceptional. Furthermore, a case-by-case analysis
in Section \ref{sec:Case} by Hermite's criterion will show that $(a_{7},a_{6},a_{5})\neq(0,0,0)$
if $f$ is a non-exceptional PP over $\mathbb{F}_{2^{r}}$ with $4\leqslant r\leqslant9$.
For later use, up to linear transformations, more constraints on the
coefficients $a_{i}$'s can be imposed by the following Proposition
\ref{prop:Req}.
\begin{prop}
\label{prop:Req} Let $e$ be a generator of the multiplicative group
$\mathbb{F}_{2^{r}}^{*}$. For each $a\in\mathbb{F}_{2^{r}}^{*}$,
fix an element $\omega(a)$ in the set $\mathbb{F}_{2^{r}}\backslash\{u^{2}+au:u\in\mathbb{F}_{2^{r}}\}$.
Then each polynomial of degree $8$ over $\mathbb{F}_{2^{r}}$ is
linearly ralated to some $f(x)=x^{8}+\sum_{i=1}^{7}a_{i}x^{i}\in\mathbb{F}_{2^{r}}[x]$
with all $a_{i}\in\mathbb{F}_{2^{r}}$ satisfying the following requirements
$(\mathbf{R1})\sim(\mathbf{R3})$:

$(\mathbf{R1})$ $(a_{7},a_{6})\in\{(1,0),(0,1),(0,0)\}$.

$(\mathbf{R2})$ if $(a_{7},a_{6})=(0,1)$, then $a_{4}\in\begin{cases}
\{0\} & \text{if }a_{5}=0,\\
\{0,\omega(a_{5})\} & \text{if }a_{5}\neq0.
\end{cases}$

$(\mathbf{R3})$ if $a_{7}=a_{6}=0\neq a_{5}$, then $a_{4}=0$ and
$a_{5}\in\begin{cases}
\{1\} & \text{if }r\text{ is odd},\\
\{1,e,e^{2}\} & \text{if }r\text{ is even}.
\end{cases}$

Moreover, suppose $(a_{7},a_{6},a_{5})\neq(0,0,0)$ and let $g(x)=x^{8}+\sum_{i=1}^{7}a_{i}'x^{i}\in\mathbb{F}_{2^{r}}[x]$
with all $a_{i}'\in\mathbb{F}_{2^{r}}$ satisfying the same requirements
$(\mathbf{R1})\sim(\mathbf{R3})$, then $f$ and $g$ are linearly
related if and only if one of the following happens: 

$(i)$ $g=f$.

$(ii)$ $g(x)=f(x+a_{5})-f(a_{5})$ with $(a_{7},a_{6})=(0,1)$ and
$a_{5}(a_{3}+a_{5}^{3})\neq0$. In this case, 
\[
(a_{7}',a_{6}',a_{5}',a_{4}',a_{3}',a_{2}',a_{1}')=(a_{7},a_{6},a_{5},a_{4},a_{3},a_{2}+a_{3}a_{5}+a_{5}^{4},a_{1}+a_{3}a_{5}^{2}+a_{5}^{5}).
\]

$(iii)$ $r$ is even, $a_{7}=a_{6}=a_{4}=0\neq a_{5}$ and $g(x)=t^{-8}f(tx)$
with $t\in\{e^{\frac{2^{r}-1}{3}},e^{\frac{2(2^{r}-1)}{3}}\}$.\end{prop}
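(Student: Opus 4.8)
The plan is to treat the two assertions separately, working throughout modulo linear transformations. By scaling the leading coefficient to $1$ and translating away the constant term, every degree-$8$ polynomial over $\mathbb{F}_{2^{r}}$ is linearly related to a normalized $f(x)=x^{8}+\sum_{i=1}^{7}a_{i}x^{i}$, and any linear relation between two normalized polynomials is forced (by matching the leading and constant coefficients) to have the shape $g(x)=t^{-8}\bigl(f(tx+u)-f(u)\bigr)$ for some $(t,u)\in\mathbb{F}_{2^{r}}^{*}\times\mathbb{F}_{2^{r}}=:G$. So the proposition is entirely about the $G$-action on normalized octics, and the first step is to record this action on the top coefficients: expanding $(tx+u)^{i}$ and reducing the binomials $\binom{i}{j}$ modulo $2$ yields
\[
a_{7}'=t^{-1}a_{7},\qquad a_{6}'=t^{-2}(a_{6}+a_{7}u),\qquad a_{5}'=t^{-3}(a_{5}+a_{7}u^{2}),
\]
\[
a_{4}'=t^{-4}(a_{4}+a_{5}u+a_{6}u^{2}+a_{7}u^{3}),\qquad a_{3}'=t^{-5}(a_{3}+a_{7}u^{4}),
\]
plus analogous routine formulas for $a_{2}'$ and $a_{1}'$.

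For the existence of a representative satisfying $(\mathbf{R1})\sim(\mathbf{R3})$ I case-split on $(a_{7},a_{6})$. If $a_{7}\neq0$, then $t=a_{7}$ forces $a_{7}'=1$ and the unique $u$ with $a_{6}+a_{7}u=0$ forces $a_{6}'=0$, exhausting $G$; $(\mathbf{R2})$ and $(\mathbf{R3})$ are vacuous. If $a_{7}=0\neq a_{6}$, then (squaring being bijective) $t_{0}=\sqrt{a_{6}}$ is the only $t$ giving $(a_{7}',a_{6}')=(0,1)$, so $a_{5}'=t_{0}^{-3}a_{5}$ is pinned down, and the point is that as $u$ varies the attainable $a_{4}'=t_{0}^{-4}(a_{4}+a_{5}u+t_{0}^{2}u^{2})$ sweeps out a coset of $\{v^{2}+a_{5}'v:v\in\mathbb{F}_{2^{r}}\}$---this follows, via $w=t_{0}u$, from the identity $\{z^{2}+cz:z\}=c^{2}\{v^{2}+v:v\}$ together with $a_{5}/t_{0}^{3}=a_{5}'$. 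Since $\{v^{2}+v:v\}=\ker\mathrm{Tr}_{\mathbb{F}_{2^{r}}/\mathbb{F}_{2}}$ has index $2$ (and $\{v^{2}+a_{5}'v:v\}$ is all of $\mathbb{F}_{2^{r}}$ when $a_{5}'=0$), either some $u$ makes $a_{4}'=0$ or the relevant coset is the nontrivial one, which by definition of $\omega$ contains $\omega(a_{5}')$; in either case $(\mathbf{R2})$ is met. If $a_{7}=a_{6}=0$ then $a_{6}'\equiv0$; when also $a_{5}=0$ there is nothing to do, and when $a_{5}\neq0$ one uses $a_{5}'=t^{-3}a_{5}$ to place $a_{5}'$ in the prescribed set---a single orbit under $t\mapsto t^{-3}$ when $3\nmid 2^{r}-1$ (i.e.\ $r$ odd), and the coset representatives $\{1,e,e^{2}\}$ of the index-$3$ subgroup of cubes when $r$ is even---after which the unique $u$ with $a_{4}+a_{5}u=0$ gives $a_{4}'=0$, so $(\mathbf{R3})$ holds.

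For the uniqueness clause, the ``if'' direction is immediate: $(i)$ is trivial; $(ii)$ is the transformation $(t,u)=(1,a_{5})$, whose effect on $(a_{2},a_{1})$ is exactly the displayed formula; $(iii)$ is $(t,u)=(t,0)$. For ``only if'', take $g=t^{-8}(f(tx+u)-f(u))$ with both $f,g$ satisfying $(\mathbf{R1})\sim(\mathbf{R3})$ and $(a_{7},a_{6},a_{5})\neq(0,0,0)$, and again split on $(a_{7},a_{6})$. In the case $(1,0)$, $a_{7}'=t^{-1}\neq0$ forces $t=1$ by $(\mathbf{R1})$, and then $a_{6}'=u=0$, so $g=f$, case $(i)$. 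In the case $(0,1)$, $a_{6}'=t^{-2}\neq0$ forces $(a_{7}',a_{6}')=(0,1)$ and $t=1$, hence $g(x)=f(x+u)-f(u)$ with $a_{4}'-a_{4}=u^{2}+a_{5}u\in\{v^{2}+a_{5}v:v\}$; since $0$ and $\omega(a_{5})$ lie in distinct cosets of this subgroup, $(\mathbf{R2})$ for both $f$ and $g$ forces $a_{4}'=a_{4}$, whence $u^{2}+a_{5}u=0$ and $u\in\{0,a_{5}\}$---the value $0$ giving $(i)$, and the value $a_{5}$ giving $(i)$ or $(ii)$ according as $a_{5}(a_{3}+a_{5}^{3})$ vanishes or not (the translation formulas show translation by $a_{5}$ fixes $f$ exactly when $a_{5}(a_{3}+a_{5}^{3})=0$). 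In the case $(0,0)$ we have $a_{5}\neq0$ and, by $(\mathbf{R3})$ for $f$, $a_{4}=0$, so $a_{4}'=t^{-4}a_{5}u$; $(\mathbf{R3})$ for $g$ then forces $a_{4}'=0$, so $u=0$ and $g(x)=t^{-8}f(tx)$, while $(\mathbf{R3})$ for $a_{5}'=t^{-3}a_{5}$ forces $t^{3}=1$, giving $t=1$ (case $(i)$) when $r$ is odd and $t$ a cube root of unity (case $(i)$ or $(iii)$) when $r$ is even.

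I expect the main obstacle to be the middle branch of the existence argument, $a_{7}=0\neq a_{6}$: once the scaling fixes $t=\sqrt{a_{6}}$, one must check that the surviving translation parameter still drives $a_{4}'$ across a full coset of the \emph{correct} subgroup $\{v^{2}+a_{5}'v:v\}$ rather than a $t_{0}$-distorted one, so that the distinguished element $\omega(a_{5}')$ really is reachable; the identical coset-versus-complement dichotomy is then what forces rigidity in the $(0,1)$ branch of the uniqueness part. The rest---the modulo-$2$ binomial reductions and the explicit $(a_{2}',a_{1}')$ formula in $(ii)$---is routine computation.
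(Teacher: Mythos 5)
Your proposal is correct and follows essentially the same route as the paper: normalize via the leading and constant coefficients so that every linear relation has the form $g(x)=t^{-8}\bigl(f(tx+u)-f(u)\bigr)$, achieve $(\mathbf{R1})$ by the case split on $(a_{7},a_{6})$, achieve $(\mathbf{R2})$ via the index-$2$ Artin--Schreier subgroup $\{v^{2}+a_{5}v\}$ and $(\mathbf{R3})$ via coset representatives of the cubes, and then derive the rigidity statement by forcing $t$ and $u$ from the requirements. The only cosmetic difference is that you precompute the general coefficient-transformation formulas and phrase the $(\mathbf{R2})$ step as a coset of $\{v^{2}+a_{5}'v\}$ after the substitution $v=t_{0}^{-2}w$, whereas the paper first rescales to $a_{6}=1$ and then only translates; the two are equivalent.
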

\begin{proof}
By definition, each polynomial of degree $8$ over $\mathbb{F}_{2^{r}}$
is linearly ralated to some $h(x)=x^{8}+\sum_{i=1}^{7}c_{i}x^{i}\in\mathbb{F}_{2^{r}}[x]$
with all $c_{i}\in\mathbb{F}_{2^{r}}$. If $c_{7}\neq0$, let $f(x)=c_{7}^{-8}h(c_{7}x+c_{7}^{-1}c_{6})-c_{7}^{-8}h(c_{7}^{-1}c_{6})=x^{8}+x^{7}+\sum_{i=1}^{5}a_{i}x^{i}$.
If $c_{7}=0\neq c_{6}$, let $f(x)=c_{6}^{-4}h(c_{6}^{2^{r-1}}x)=x^{8}+x^{6}+\sum_{i=1}^{5}a_{i}x^{i}$.
If $c_{7}=c_{6}=0$, let $f=h$. In any case, $h$ is linearly related
to $f(x)=x^{8}+\sum_{i=1}^{7}a_{i}x^{i}$ with all $a_{i}\in\mathbb{F}_{2^{r}}$
satisfying $(\mathbf{R1})$. Henceforth, we only need to adjust $f$
up to linear transformations, to meet $(\mathbf{R2})$ and $(\mathbf{R3})$.

$(\mathbf{R2})$: Suppose $(a_{7},a_{6})=(0,1)$. Then $f(x)=x^{8}+x^{6}+\sum_{i=1}^{5}a_{i}x^{i}$
can be replaced by 
\[
f(x+u)-f(u)=x^{8}+x^{6}+a_{5}x^{5}+(u^{2}+a_{5}u+a_{4})x^{4}+\text{lower terms},
\]
with an arbitrary $u\in\mathbb{F}_{2^{r}}$. If $a_{5}=0$, take $u=a_{4}^{2^{r-1}}$,
then $f(x+u)-f(u)$ satisfies $(\mathbf{R2})$. Hereafter assume $a_{5}\neq0$.
Note that $u\mapsto u^{2}+a_{5}u$ gives a $\mathbb{F}_{2}$-linear
map from $\mathbb{F}_{2^{r}}$ to itself, so $\{u^{2}+a_{5}u:u\in\mathbb{F}_{2^{r}}\}$
is a $\mathbb{F}_{2}$-linear subspace containing exactly one half
of the elements of $\mathbb{F}_{2^{r}}$. Thus $\{u^{2}+a_{5}u:u\in\mathbb{F}_{2^{r}}\}$
contains either $a_{4}$ or $a_{4}+\omega(a_{5})$ (but not both).
Take $u\in\mathbb{F}_{2^{r}}$ such that $u^{2}+a_{5}u\in\{a_{4},a_{4}+\omega(a_{5})\}$,
then $f(x+u)-f(u)$ satisfies $(\mathbf{R2})$.

$(\mathbf{R3})$: Suppose $a_{7}=a_{6}=0\neq a_{5}$. If $a_{4}\neq0$,
we can replace $f(x)=x^{8}+\sum_{i=1}^{5}a_{i}x^{i}$ by 
\[
f(x+u)-f(u)=x^{8}+a_{5}x^{5}+(a_{5}u+a_{4})x^{4}+\text{lower terms},
\]
with $u=a_{5}^{-1}a_{4}$ annihilating the coefficient of $x^{4}$.
Hereafter assume $a_{4}=0$, and replace $f$ by 
\[
t^{-8}f(tx)=x^{8}+t^{-3}a_{5}x^{5}+\sum_{i=1}^{3}t^{i-8}a_{i}x^{i},
\]
with an arbitrary $t\in\mathbb{F}_{2^{r}}^{*}$. Let 
\[
\Lambda=\{e^{i}:0\leqslant i<\mathrm{gcd}(3,2^{r}-1)\}=\begin{cases}
\{1\} & \text{if }r\text{ is odd},\\
\{1,e,e^{2}\} & \text{if }r\text{ is even},
\end{cases}
\]
which is a complete set of coset representatives of $\mathbb{F}_{2^{r}}^{*}/\{t^{3}:t\in\mathbb{F}_{2^{r}}^{*}\}$.
Certain $t\in\mathbb{F}_{2^{r}}^{*}$ ensures that $t^{-3}a_{5}\in\Lambda$,
and thus $t^{-8}f(tx)$ satisfies $(\mathbf{R3})$.

So far we have showed that each polynomial of degree $8$ over $\mathbb{F}_{2^{r}}$
is linearly ralated to some $f(x)=x^{8}+\sum_{i=1}^{7}a_{i}x^{i}\in\mathbb{F}_{2^{r}}[x]$
with all $a_{i}\in\mathbb{F}_{2^{r}}$ satisfying $(\mathbf{R1})\sim(\mathbf{R3})$.
In the following, suppose $(a_{7},a_{6},a_{5})\neq(0,0,0)$ and let
$f(x)$ be linearly related to $g(x)=x^{8}+\sum_{i=1}^{7}a_{i}'x^{i}\in\mathbb{F}_{2^{r}}[x]$
with all $a_{i}'\in\mathbb{F}_{2^{r}}$ satisfying $(\mathbf{R1})\sim(\mathbf{R3})$.
By definition, there exist $s,t\in\mathbb{F}_{2^{r}}^{*}$ and $u,v\in\mathbb{F}_{2^{r}}$
such that 
\begin{align*}
x^{8}+\sum_{i=1}^{7}a_{i}'x^{i} & =g(x)=sf(tx+u)+v\\
 & =st^{8}x^{8}+st^{7}a_{7}x^{7}+s(a_{7}u+a_{6})t^{6}x^{6}+\text{lower terms}.
\end{align*}
Comparing the coefficients of $x^{8}$, we see that $s=t^{-8}$.

(1) Suppose $(a_{7},a_{6})=(1,0)$. As $a_{7}'=st^{7}a_{7}=t^{-1}\neq0$,
$(a_{7}',a_{6}')=(1,0)$ by $(\mathbf{R1})$. So $s=t=1$. Comparing
the coefficients of $x^{6}$, $0=a_{6}'=a_{7}u+a_{6}=u$. So $v=g(0)-f(u)=0$,
and $f=g$.

(2) Suppose $(a_{7},a_{6})=(0,1)$. Note that 
\begin{align*}
x^{8}+\sum_{i=1}^{7}a_{i}'x^{i} & =g(x)=t^{-8}f(tx+u)+v\\
 & =x^{8}+t^{-2}x^{6}+t^{-3}a_{5}x^{5}+t^{-4}(u^{2}+a_{5}u+a_{4})x^{4}+\text{lower terms},
\end{align*}
so $a_{7}'=0\neq a_{6}'=t^{-2}$. By $(\mathbf{R1})$, $(a_{7}',a_{6}')=(0,1)$
and $t^{2}=1$. So $t=1$, $a_{5}'=a_{5}$ and $a_{4}'-a_{4}=u^{2}+a_{5}u$.
By $(\mathbf{R2})$, $a_{4}'=a_{4}\in\{0,\omega(a_{5})\}$, and $u\in\{u\in\mathbb{F}_{2^{r}}:u^{2}+a_{5}u=0\}=\{0,a_{5}\}$.
If $u=0$, then $v=g(0)-f(u)=0$ and $f=g$. Hereafter assume $u=a_{5}$.
Note that 
\begin{align*}
 & x^{8}+x^{6}+\sum_{i=1}^{5}a_{i}'x^{i}=g(x)=f(x+a_{5})-f(a_{5})\\
={} & x^{8}+x^{6}+a_{5}x^{5}+a_{4}x^{4}+a_{3}x^{3}+(a_{2}+a_{3}a_{5}+a_{5}^{4})x^{2}+(a_{1}+a_{3}a_{5}^{2}+a_{5}^{5})x.
\end{align*}
So $(a_{7}',a_{6}',a_{5}',a_{4}',a_{3}',a_{2}',a_{1}')=(a_{7},a_{6},a_{5},a_{4},a_{3},a_{2}+a_{3}a_{5}+a_{5}^{4},a_{1}+a_{3}a_{5}^{2}+a_{5}^{5})$.
In this case, $f\neq g$ if and only if $a_{5}(a_{3}+a_{5}^{3})\neq0$.

(3) Suppose $a_{7}=a_{6}=0\neq a_{5}$. Then $a_{4}=0$ by $(\mathbf{R3})$.
Note that 
\[
x^{8}+\sum_{i=1}^{7}a_{i}'x^{i}=t^{-8}f(tx+u)+v=x^{8}+t^{-3}a_{5}x^{5}+t^{-4}a_{5}ux^{4}+t^{-5}x^{3}+\text{lower terms}.
\]
So $a_{7}'=a_{6}'=0$, and $a_{5}'=t^{-3}a_{5}\neq0$. By $(\mathbf{R3})$,
$a_{5}=a_{5}'=t^{-3}a_{5}$ and $0=a_{4}'=t^{-4}a_{5}u$. Thus $t^{3}=1$
and $u=0$. Therefore, $g(x)=t^{-8}f(tx)$ with some element $t$
in the set 
\[
\{t\in\mathbb{F}_{2^{r}}^{*}:t^{3}=1\}=\begin{cases}
\{1\} & \text{if }r\text{ is odd},\\
\{1,e^{\frac{2^{r}-1}{3}},e^{\frac{2(2^{r}-1)}{3}}\} & \text{if }r\text{ is even}.
\end{cases}
\]
In this case, $f\neq g$ only when $r$ is even and $t\in\{e^{\frac{2^{r}-1}{3}},e^{\frac{2(2^{r}-1)}{3}}\}$.
\end{proof}

\section{\label{sec:HC}Hermite\textquoteright s Criterion}

Dickson \citep[\S11]{Dickson1897analytic} provided a criterion for
PPs over $\mathbb{F}_{q}$ on their coefficients. It is usually called
\emph{Hermite\textquoteright s criterion} as Dickson attributed its
prime field case to Hermite. Assuming some notations, we quote an
explicit version of it from \citep{LidlNiederreiter1997book}.

Let $\mathbb{N}=\{m\in\mathbb{Z}:m\geqslant0\}$. For $m\in\mathbb{N}$
and $f\in\mathbb{F}_{q}[x]$, let $[x^{m}:f]$ denote the coefficient
of $x^{m}$ in $f(x)$. In other words, for a nonzero polynomial $f\in\mathbb{F}_{q}[x]$,
we have $f(x)=\sum_{m=0}^{\deg(f)}[x^{m}:f]\cdot x^{m}$ and $\deg(f)=\max\{m\in\mathbb{N}:[x^{m}:f]\neq0\}$.
\begin{lem}
[{Hermite\textquoteright s criterion \citep[Theorem 7.6]{LidlNiederreiter1997book}}]\label{lem:HC}
A necessary and sufficient condition for $f\in\mathbb{F}_{q}[x]$
to be a PP over $\mathbb{F}_{q}$ is  
\[
\sum_{t=1}^{\lfloor\frac{\deg(f^{k})}{q-1}\rfloor}[x^{t(q-1)}:f^{k}]\begin{cases}
=0 & \text{for }1\leqslant k\leqslant q-2\ (\text{and }\gcd(k,q)=1),\\
\neq0 & \text{for }k=q-1.
\end{cases}
\]

\end{lem}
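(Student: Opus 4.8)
The statement is classical, and the plan is to prove it by the standard power-sum (Hermite) argument over $\mathbb{F}_{q}$. The ingredient to set up first is the elementary identity
\[
\sum_{c\in\mathbb{F}_{q}}c^{n}=\begin{cases}-1 & \text{if }n\geqslant1\text{ and }(q-1)\mid n,\\ 0 & \text{otherwise,}\end{cases}
\]
valid for every $n\in\mathbb{N}$ under the convention $0^{0}=1$ (so that $n=0$ gives $\sum_{c}1=q\cdot1=0$ in $\mathbb{F}_{q}$). This follows by fixing a generator $g$ of $\mathbb{F}_{q}^{*}$ and evaluating the geometric sum $\sum_{i=0}^{q-2}(g^{n})^{i}$, which equals $q-1=-1$ when $g^{n}=1$ and equals $((g^{n})^{q-1}-1)/(g^{n}-1)=0$ otherwise.

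Next I would rewrite the quantity appearing in the criterion as a power sum. Writing $f(x)^{k}=\sum_{N}[x^{N}:f^{k}]\,x^{N}$ and applying the identity above term by term (only $N$ a positive multiple of $q-1$ contributes, each with coefficient $-1$) yields immediately
\[
\sum_{c\in\mathbb{F}_{q}}f(c)^{k}=-\sum_{t=1}^{\lfloor\deg(f^{k})/(q-1)\rfloor}[x^{t(q-1)}:f^{k}].
\]
Hence the asserted criterion is equivalent to saying $\sum_{c}f(c)^{k}=0$ for $1\leqslant k\leqslant q-2$ and $\sum_{c}f(c)^{q-1}\neq0$. The parenthetical restriction $\gcd(k,q)=1$ is then disposed of by the Frobenius identity $\sum_{c}f(c)^{pk'}=\bigl(\sum_{c}f(c)^{k'}\bigr)^{p}$ in characteristic $p$: if $p\mid k$, write $k=p^{s}m$ with $p\nmid m$ and $1\leqslant m\leqslant(q-2)/p<q-2$, so the vanishing at $k$ follows from the vanishing at the coprime exponent $m$; thus it suffices to impose the conditions only for $k$ coprime to $q$.

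It remains to show that $f$ is a PP over $\mathbb{F}_{q}$ if and only if these power-sum conditions hold. For the forward implication, a permutation sends the list $(f(c))_{c\in\mathbb{F}_{q}}$ onto $\mathbb{F}_{q}$, so $\sum_{c}f(c)^{k}=\sum_{b\in\mathbb{F}_{q}}b^{k}$, which by the opening identity is $0$ for $1\leqslant k\leqslant q-2$ and $-1\neq0$ for $k=q-1$. For the converse, assume the conditions and suppose, for contradiction, that some $b_{0}\in\mathbb{F}_{q}$ is not a value of $f$. Using the indicator $1-u^{q-1}$ (which is $1$ at $u=0$ and $0$ on $\mathbb{F}_{q}^{*}$), for any $b\in\mathbb{F}_{q}$ the number $N(b)$ of $c$ with $f(c)=b$ satisfies, as an element of $\mathbb{F}_{q}$, $N(b)\cdot1=\sum_{c}\bigl(1-(f(c)-b)^{q-1}\bigr)$. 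Expanding $(f(c)-b)^{q-1}$ by the binomial theorem and summing over $c$, the $j=0$ term dies because $\sum_{c}1=0$ and the terms $1\leqslant j\leqslant q-2$ die by hypothesis, leaving $\sum_{c}(f(c)-b)^{q-1}=\sum_{c}f(c)^{q-1}$; hence $N(b)\cdot1=-\sum_{c}f(c)^{q-1}$ in $\mathbb{F}_{q}$, independently of $b$. Taking $b=b_{0}$ forces $N(b_{0})=0$, whence $\sum_{c}f(c)^{q-1}=0$, contradicting the hypothesis. So $f$ is surjective on the finite set $\mathbb{F}_{q}$, hence a permutation.

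The computations are routine; the genuinely delicate points are the $0^{0}=1$ convention in the power-sum identity, the need to keep the quantity $N(b)\cdot1$ interpreted inside $\mathbb{F}_{q}$ (i.e.\ modulo $p$) rather than in $\mathbb{Z}$, and the observation that the converse only establishes surjectivity of $f$ — which for a self-map of a finite set already yields bijectivity, so no integer counting of $N(b)$ is required. The conceptual heart, such as it is, lies in the indicator trick in the converse direction.
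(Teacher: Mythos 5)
The paper does not prove this lemma at all --- it is quoted verbatim from Lidl--Niederreiter [Theorem 7.6] as a known result. Your argument is correct and is essentially the standard textbook proof from that reference (the power-sum identity $\sum_{c}c^{n}$, the equivalence with $\sum_{c}f(c)^{k}$, the Frobenius reduction handling the $\gcd(k,q)=1$ restriction, and the indicator-function count of preimages for the converse), so there is nothing to compare beyond noting that you have supplied a proof where the paper supplies a citation.
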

Recall how to calculate $[x^{m}:f^{k}]$ with the help of multinomial
coefficients. For integers $k$, $j_{1}$, $j_{2}$, $\dots$, and
$j_{t}$, define the \emph{multinomial coefficient} as 
\[
\binom{k}{j_{1},j_{2},\dots,j_{t}}:=\begin{cases}
\dfrac{k!}{j_{1}!j_{2}!\cdots j_{t}!} & \text{if }k=j_{1}+j_{2}+\cdots+j_{t}\text{ and all }j_{1},\dots,j_{t}\geqslant0,\\
0 & \text{otherwise}.
\end{cases}
\]
For $f(x)=x^{8}+\sum_{i=1}^{7}a_{i}x^{i}\in\mathbb{F}_{2^{r}}[x]$
with all $a_{i}\in\mathbb{F}_{2^{r}}$, by the multinomial theorem,
\[
f(x)^{k}=\sum_{j_{1}+j_{2}+\cdots+j_{8}=k}\binom{k}{j_{1},j_{2},\dots,j_{8}}(\prod_{i=1}^{7}a_{i}^{j_{i}})\cdot x^{j_{1}+2j_{2}+3j_{3}+4j_{4}+5j_{5}+6j_{6}+7j_{7}+8j_{8}}.
\]
Let $J(k,m)$ denote the set of all solutions $(j_{1},j_{2},\dots,j_{8})\in\mathbb{N}^{8}$
of the linear equations: 
\[
\left\{ \begin{aligned}j_{1}+j_{2}+j_{3}+j_{4}+j_{5}+j_{6}+j_{7}+j_{8} & =k,\\
j_{1}+2j_{2}+3j_{3}+4j_{4}+5j_{5}+6j_{6}+7j_{7}+8j_{8} & =m.
\end{aligned}
\right.
\]
Then $[x^{m}:f^{k}]={\displaystyle \sum_{(j_{1},\dots,j_{8})\in J(k,m)}\binom{k}{j_{1},j_{2},\dots,j_{8}}a_{1}^{j_{1}}a_{2}^{j_{2}}a_{3}^{j_{3}}a_{4}^{j_{4}}a_{5}^{j_{5}}a_{6}^{j_{6}}a_{7}^{j_{7}}}$.

Working over $\mathbb{F}_{2^{r}}$, the calculation of multinomial
coefficients can be greatly simplified by the following multinomial
analogue (and corollary) of \emph{Lucas's theorem} \citep[(137)]{Lucas1878theorie}.
\begin{lem}
\label{lem:Lucas} \textup{\citep[\S\S14]{Dickson1897analytic}} Let
$p$ be a prime, $k=\sum_{s=0}^{l}k_{s}p^{s}$, and $j_{i}=\sum_{s=0}^{l}j_{i,s}p^{s}$
for $1\leqslant i\leqslant t$, with all $k_{s}$ and $j_{i,s}\in\{0,1,2,\dots,p-1\}$.
Then 
\[
\binom{k}{j_{1},j_{2},\dots,j_{t}}\equiv\prod_{s=0}^{l}\binom{k_{s}}{j_{1,s},j_{2,s},\dots,j_{t,s}}\ (\mathrm{mod}\ p).
\]
In particular,  $p\nmid{\displaystyle \binom{k}{j_{1},j_{2},\dots,j_{t}}}$
if and only if $k_{s}=j_{1,s}+j_{2,s}+\cdots+j_{t,s}$ for any $0\leqslant s\leqslant l$.
\end{lem}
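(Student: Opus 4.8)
The plan is to prove the congruence by comparing coefficients of two expansions of $(x_{1}+\cdots+x_{t})^{k}$ in the polynomial ring over $\mathbb{F}_{p}$, using that the Frobenius $y\mapsto y^{p}$ is additive in characteristic $p$; the ``in particular'' statement then drops out from the definition of the multinomial coefficient. First I would record the multinomial theorem over $\mathbb{F}_{p}$: for any $N\in\mathbb{N}$,
\[
(x_{1}+x_{2}+\cdots+x_{t})^{N}\equiv\sum\binom{N}{i_{1},\dots,i_{t}}x_{1}^{i_{1}}\cdots x_{t}^{i_{t}}\pmod p,
\]
the sum over $(i_{1},\dots,i_{t})\in\mathbb{N}^{t}$ with $i_{1}+\cdots+i_{t}=N$. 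Since $x_{1},\dots,x_{t}$ are independent indeterminates, two such polynomials are congruent mod $p$ exactly when all their coefficients are; so it suffices to produce a second expression for $(x_{1}+\cdots+x_{t})^{k}$ whose coefficient of $x_{1}^{j_{1}}\cdots x_{t}^{j_{t}}$ is $\prod_{s=0}^{l}\binom{k_{s}}{j_{1,s},\dots,j_{t,s}}$.

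For that second expression I would use the Frobenius: modulo $p$ the map $y\mapsto y^{p}$ is a ring homomorphism, so by induction on $s$ one gets $(x_{1}+\cdots+x_{t})^{p^{s}}\equiv x_{1}^{p^{s}}+\cdots+x_{t}^{p^{s}}\pmod p$ for every $s\geqslant0$. Writing $k=\sum_{s=0}^{l}k_{s}p^{s}$ and grouping the factors,
\[
(x_{1}+\cdots+x_{t})^{k}=\prod_{s=0}^{l}\bigl((x_{1}+\cdots+x_{t})^{p^{s}}\bigr)^{k_{s}}\equiv\prod_{s=0}^{l}(x_{1}^{p^{s}}+\cdots+x_{t}^{p^{s}})^{k_{s}}\pmod p.
\]
Expanding the $s$-th factor by the multinomial theorem (with $x_{a}$ replaced by $x_{a}^{p^{s}}$ and $N=k_{s}\leqslant p-1$) gives $\sum\binom{k_{s}}{j_{1,s},\dots,j_{t,s}}x_{1}^{j_{1,s}p^{s}}\cdots x_{t}^{j_{t,s}p^{s}}$, summed over digit tuples with $j_{1,s}+\cdots+j_{t,s}=k_{s}$ and $0\leqslant j_{a,s}\leqslant p-1$. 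Multiplying these $l+1$ factors, a monomial $x_{1}^{j_{1}}\cdots x_{t}^{j_{t}}$ can arise only from choices of digits with $j_{a}=\sum_{s}j_{a,s}p^{s}$ for each $a$; by uniqueness of the base-$p$ representation (the constraint $0\leqslant j_{a,s}\leqslant p-1$ is exactly what makes this work) precisely one such choice contributes, and its coefficient is $\prod_{s}\binom{k_{s}}{j_{1,s},\dots,j_{t,s}}$. Comparing with the first expansion yields the congruence.

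For the ``in particular'' clause: since $0\leqslant k_{s}\leqslant p-1$, the integer $k_{s}!$ is coprime to $p$, so $p\nmid\binom{k_{s}}{j_{1,s},\dots,j_{t,s}}$ whenever that coefficient is nonzero, i.e. whenever $k_{s}=j_{1,s}+\cdots+j_{t,s}$; hence the product $\prod_{s}\binom{k_{s}}{j_{1,s},\dots,j_{t,s}}$ is a unit mod $p$ precisely when $k_{s}=\sum_{a}j_{a,s}$ for every $s$, which by the congruence is equivalent to $p\nmid\binom{k}{j_{1},\dots,j_{t}}$. The only real obstacle is the coefficient-matching in the product of the $l+1$ factors: one must verify that distinct digit tuples give genuinely distinct monomials (no merging or cancellation), and note that this is consistent with the convention $\binom{k}{j_{1},\dots,j_{t}}=0$ unless $k=\sum_{a}j_{a}$, since carries can make $k=\sum_{a}j_{a}$ hold while $k_{s}\neq\sum_{a}j_{a,s}$ for some $s$, in which case the right-hand product correctly vanishes through a zero factor. (Alternatively one could simply cite Dickson \citep[\S\S14]{Dickson1897analytic}, but the argument above is self-contained.)
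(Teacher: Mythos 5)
Your proof is correct and complete. Note, however, that the paper itself offers no proof of this lemma at all --- it is stated with only a citation to Dickson \citep[\S\S14]{Dickson1897analytic} --- so there is no in-paper argument to compare against; what you have written is the standard self-contained derivation. The two-expansion argument is sound: the Frobenius identity $(x_{1}+\cdots+x_{t})^{p^{s}}\equiv x_{1}^{p^{s}}+\cdots+x_{t}^{p^{s}}\pmod p$ is correctly justified, the coefficient extraction works because each $j_{a,s}\leqslant k_{s}\leqslant p-1$ forces the exponent tuples arising from distinct digit choices to be distinct (uniqueness of base-$p$ representation), and you correctly observe that when carries occur the right-hand product vanishes through a factor $\binom{k_{s}}{j_{1,s},\dots,j_{t,s}}=0$, which keeps the congruence consistent with the convention that the multinomial coefficient is zero when the parts do not sum to the top entry. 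The ``in particular'' clause also follows exactly as you say, since each nonvanishing factor $k_{s}!/\prod_{a}j_{a,s}!$ with $k_{s}\leqslant p-1$ is a unit modulo $p$.
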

For $n=\sum_{s=0}^{\lfloor\log_{2}(n)\rfloor}n_{s}2^{s}\in\mathbb{N}$
with all $n_{s}\in\{0,1\}$, let 
\begin{align*}
\beta(n) & =\{s\in\mathbb{N}:n_{s}=1,\ 0\leqslant s\leqslant\log_{2}(n)\}\\
 & =\{s\in\mathbb{N}:\lfloor n/2^{s}\rfloor\text{ is odd}\}.
\end{align*}
When $p=2$ in Lemma \ref{lem:Lucas}, note that $\binom{k}{j_{1},j_{2},\dots,j_{t}}\equiv1\ (\mathrm{mod}\ 2)$
if and only if $\beta(k)$ is the disjoint union $\bigsqcup_{i=1}^{t}\beta(j_{i})$
of its subsets $\beta(j_{1}),\beta(j_{2})\dots,\beta(j_{t})$ (i.e.
$\beta(k)=\bigcup_{i=1}^{t}\beta(j_{i})$ and $\beta(j_{i})\cap\beta(j_{i'})=\O$
for any $i\neq i'$). Therefore, 
\[
[x^{m}:f^{k}]=\sum_{\substack{\beta(k)=\bigsqcup_{i=1}^{8}\beta(j_{i})\\
j_{1}+2j_{2}+3j_{3}+\cdots+8j_{8}=m
}
}a_{1}^{j_{1}}a_{2}^{j_{2}}a_{3}^{j_{3}}a_{4}^{j_{4}}a_{5}^{j_{5}}a_{6}^{j_{6}}a_{7}^{j_{7}}\in\mathbb{F}_{2^{r}}.
\]
Suppose $\beta(k)=\{s_{0},s_{1},\dots,s_{n-1}\}$, then the set $\{(j_{1},j_{2},\dots,j_{8})\in\mathbb{N}^{8}:\beta(k)=\bigsqcup_{i=1}^{8}\beta(j_{i})\}$
contains exactly $8^{n}$ elements, which can be listed explicitly
by the one-to-one correspondence: 
\begin{eqnarray*}
\{(j_{1},j_{2},\dots,j_{8})\in\mathbb{N}^{8}:\beta(k)=\bigsqcup_{i=1}^{8}\beta(j_{i})\} & \stackrel{\simeq}{\longleftrightarrow} & \{0,1,2,3,\dots,8^{n}-1\}.\\
(j_{1},j_{2},\dots,j_{8})\text{ with} & \leftrightarrow & u=\sum_{v=0}^{n-1}u_{v}8^{v}\text{ with}\\
j_{i}=\begin{cases}
\sum_{\stackrel{0\leqslant v<n}{u_{v}=i}}2^{s_{v}} & \text{if }1\leqslant i\leqslant7\\
\sum_{\stackrel{0\leqslant v<n}{u_{v}=0}}2^{s_{v}} & \text{if }i=8
\end{cases} &  & u_{v}=\begin{cases}
i & \text{if }s_{v}\in\beta(j_{i})\text{ and }1\leqslant i\leqslant7\\
0 & \text{if }s_{v}\in\beta(j_{8})
\end{cases}
\end{eqnarray*}

By the above arguments, we can calculate the left hand side in Lemma
\ref{lem:HC} (Hermite\textquoteright s criterion) for the $k$-th
power of a PP $f(x)=x^{8}+\sum_{i=1}^{7}a_{i}x^{i}$ over $\mathbb{F}_{q}$
(with $q=2^{r}$) as 
\begin{align*}
 & \mathbf{HC}(r,k,a_{7},a_{6},a_{5},a_{4},a_{3},a_{2},a_{1})=\sum_{t=1}^{\lfloor\frac{8k}{q-1}\rfloor}[x^{t(q-1)}:f^{k}]\\
=\  & \sum_{\substack{0\leqslant u<8^{n}\\
\sum_{i=1}^{7}ij_{i}(u)+8j_{0}(u)\in\{t(q-1):\,1\leqslant t\leqslant\lfloor\frac{8k}{q-1}\rfloor\}
}
}a_{1}^{j_{1}(u)}a_{2}^{j_{2}(u)}a_{3}^{j_{3}(u)}a_{4}^{j_{4}(u)}a_{5}^{j_{5}(u)}a_{6}^{j_{6}(u)}a_{7}^{j_{7}(u)},
\end{align*}
with $j_{i}(u)=\sum_{\stackrel{0\leqslant v<n}{u_{v}=i}}2^{s_{v}}$
for $0\leqslant i\leqslant7$, where $\{s_{0},s_{1},\dots,s_{n-1}\}=\beta(k)$,
and $u=\sum_{v=0}^{n-1}u_{v}8^{v}$ with all $u_{v}\in\{0,1,2,\dots,7\}$.

\begin{algorithm}[h]
\protect\caption{Hermite\textquoteright s criterion for $k$-th power of a PP $f(x)=x^{8}+\sum_{i=1}^{7}a_{i}x^{i}$
over $\mathbb{F}_{2^{r}}$}
\label{alg:HC}

\begin{lstlisting}
def HC(r,k,a7,a6,a5,a4,a3,a2,a1):
    s = [i for i in range(log(k)/log(2)+1) if is_odd(k//2^i)]
    H = 0; n = len(s); q=2^r
    for u in range(8^n):
        j = [0,0,0,0,0,0,0,0]
        for v in range(n):
            j[(u//8^v)%8] += 2^s[v]
        m = j[1]+2*j[2]+3*j[3]+4*j[4]+5*j[5]+6*j[6]+7*j[7]+8*j[0]
        if m in range(q-1,8*k+1,q-1):
            H += a1^j[1]*a2^j[2]*a3^j[3]*a4^j[4]*a5^j[5]*a6^j[6]*a7^j[7]
    return H
\end{lstlisting}
\end{algorithm}

Algorithm \ref{alg:HC} realizes $\mathbf{HC}(r,k,a_{7},a_{6},a_{5},a_{4},a_{3},a_{2},a_{1})$
as a SageMath function. When running it, we can either input some
explicit values of $a_{i}$, or input the indeterminate $a_{i}$ itself
as working over the polynomial ring $\mathbb{F}_{2}[a_{1},a_{2},a_{3},a_{4},a_{5},a_{6},a_{7}]$.
For example, we can calculate $\mathbf{HC}(5,27,a_{7},0,a_{5},1,a_{3},a_{2},a_{1})$
by the following SageMath codes:
\begin{lstlisting}
K.<a1,a2,a3,a4,a5,a6,a7> = PolynomialRing(GF(2))
HC(5,27,a7,0,a5,1,a3,a2,a1)
\end{lstlisting}

By Lemma \ref{lem:HC} (Hermite\textquoteright s criterion), an output
of $\mathbf{HC}$ for specifice $r$ and $k$ (such that $1\leqslant k\leqslant2^{r}-2$)
provides an explicit equation satisfied by the polynomial coefficients
of a PP over $\mathbb{F}_{2^{r}}$.

\section{\label{sec:Case}Case by Case}

In this section, we analysis non-exceptional PPs over $\mathbb{F}_{2^{r}}$
of the form $f(x)=x^{8}+\sum_{i=1}^{7}a_{i}x^{i}$ with all $a_{i}\in\mathbb{F}_{2^{r}}$
subject to requirements  $(\mathbf{R1})\sim(\mathbf{R3})$ of Proposition
\ref{prop:Req}, on a case-by-case basis for $r\in\{4,5,6,7,8,9\}$,
in light of the aforementioned equations 
\[
\mathbf{HC}(r,k,a_{7},a_{6},a_{5},a_{4},a_{3},a_{2},a_{1})=0
\]
with $1\leqslant k\leqslant2^{r}-2$, from outputs of Algorithm \ref{alg:HC}
running on SageMath 8.6.

For later use, we define a SageMath function $\mathbf{PP}(q,e,a_{7},a_{6},a_{5},a_{4},a_{3},a_{2},a_{1})$
in Algorithm \ref{alg:PP} to examine whether $f(x)=x^{8}+\sum_{i=1}^{7}a_{i}x^{i}$
is a PP over $\mathbb{F}_{q}$. The following Lemma \ref{lem:Wan}
ensures that it suffices to check whether the values $f(e^{j})$ for
$0\leqslant j\leqslant\lfloor q-\frac{q-1}{8}\rfloor$ are distinct,
provided a generator $e$ of the multiplicative group $\mathbb{F}_{q}^{*}$.
\begin{lem}
[Wan \citep{Wan1993padic}] \label{lem:Wan} A polynomial $f\in\mathbb{F}_{q}[x]$
of degree $n\geqslant1$ is a PP over $\mathbb{F}_{q}$ if its value
set $\{f(c):c\in\mathbb{F}_{q}\}$ contains at least $\lfloor q-\frac{q-1}{n}\rfloor+1$
distinct values.
\end{lem}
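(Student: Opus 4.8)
The plan is to argue by contraposition: assuming $f\in\mathbb{F}_q[x]$ has degree $n\geqslant1$ and is \emph{not} a PP over $\mathbb{F}_q$, I would prove $|V_f|\leqslant q-\lceil (q-1)/n\rceil$, which negates the hypothesis since $\lfloor q-(q-1)/n\rfloor=q-\lceil (q-1)/n\rceil$ (as $q\in\mathbb{Z}$). The range $n\geqslant q-1$ is trivial: there $\lceil (q-1)/n\rceil=1$ and the claim merely says a non-PP is not surjective. So assume $n\leqslant q-2$; moreover, replacing $f$ by its remainder modulo $x^q-x$ changes neither $V_f$ nor the PP property while only lowering $n$ (hence only strengthening the target), so also assume $n<q$. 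Note $\gcd(n,q)$ need not be $1$ here (in the application $\gcd(8,2^r)=2$), so the characteristic-$p$ phenomena below are genuinely present.

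For $a\in\mathbb{F}_q$ set $N(a)=\#\{c\in\mathbb{F}_q:f(c)=a\}$, so $0\leqslant N(a)\leqslant n$, $\sum_{a}N(a)=q$, $V_f=\{a:N(a)\geqslant1\}$, and the object to bound is the number $\ell=q-|V_f|$ of omitted values, the goal being $n\ell\geqslant q-1$. The bridge to this paper's coefficient-side tools is the identity $N(a)=\sum_{c}\bigl(1-(f(c)-a)^{q-1}\bigr)$ in $\mathbb{F}_q$, which on expansion — using $\binom{q-1}{i}\equiv(-1)^i\pmod p$ — becomes $N(a)=-\sum_{i=1}^{q-1}S_i\,a^{q-1-i}$ with $S_i=\sum_{c\in\mathbb{F}_q}f(c)^i$. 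Exactly as in Section \ref{sec:HC}, $\deg f^i=ni$ forces $S_i=0$ for $1\leqslant i<\lceil (q-1)/n\rceil$, so $a\mapsto N(a)$ is represented by a polynomial of degree at most $q-1-\lceil (q-1)/n\rceil$; and by Hermite's criterion (Lemma \ref{lem:HC}), non-PP-ness means $S_i\neq0$ for some $1\leqslant i\leqslant q-2$, or else $S_{q-1}=0$. This already constrains the polynomial $N(\cdot)$ and its zero set, which contains the omitted values.

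The obstruction is that this modulo-$p$ bookkeeping degenerates exactly when many fibres have size divisible by $p$: the displayed polynomial only detects $N(a)\bmod p$, so on its own it yields a bound in the wrong direction. The remedy, and the tool I would invoke, is \emph{Wan's $p$-adic lifting lemma} \citep{Wan1993padic}: lift $f$ to the unramified degree-$r$ extension of $\mathbb{Z}_p$ (the Witt vectors of $\mathbb{F}_q$) and lift the fibre count — equivalently the $\mathbb{F}_q$-point counts of the correspondence $\{(c,c'):f(c)=f(c')\}$ and its higher analogues — to characteristic $0$, tracking $p$-adic valuations level by level while exploiting the honest integral bound $0\leqslant N(a)\leqslant n$. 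This upgrades the degenerate mod-$p$ information to the honest divisibility/counting statement that forces $n\ell\geqslant q-1$ once $\ell\geqslant1$. I expect this $p$-adic lifting step — marrying the integral bound $N(a)\leqslant n$ to the degree-$\leqslant q-1-\lceil (q-1)/n\rceil$ shape of the mod-$p$ representative of $N(\cdot)$ — to be the main obstacle; the reductions and the power-sum/Hermite setup around it are routine.
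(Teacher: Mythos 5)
First, a point of reference: the paper does not prove this lemma at all --- it is imported verbatim from Wan \citep{Wan1993padic} and used as a black box (to justify that Algorithm \ref{alg:PP} need only test the $\lfloor q-\frac{q-1}{8}\rfloor+1$ inputs $e^{j}$), so there is no internal proof to compare yours against. Your preliminary reductions are all correct: the contrapositive target $n\ell\geqslant q-1$ for the number $\ell$ of omitted values, the identity $\lfloor q-(q-1)/n\rfloor=q-\lceil(q-1)/n\rceil$, the disposal of $n\geqslant q-1$, the reduction modulo $x^{q}-x$, the vanishing of the power sums $S_{i}$ for $1\leqslant i<\lceil(q-1)/n\rceil$, and the resulting degree bound $q-1-\lceil(q-1)/n\rceil$ for the polynomial representing $N(a)\bmod p$. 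Your diagnosis that this mod-$p$ information alone cannot close the argument --- it bounds $\ell$ from above rather than below, and degenerates when $p\mid N(a)$ for many $a$, which genuinely happens for $p=2$, $n=8$ --- is also accurate.

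The gap is that everything after that diagnosis is a citation dressed up as a step. The ``$p$-adic lifting lemma'' you invoke is never stated, let alone proved, and the sentence describing its use (``lift the fibre count \dots tracking $p$-adic valuations level by level while exploiting $0\leqslant N(a)\leqslant n$'') is too vague to verify: it does not say which quantity is lifted, what congruence is obtained at each level, or how the inequality $n\ell\geqslant q-1$ falls out. Since that lemma \emph{is} the substance of the theorem --- the surrounding reductions are, as you yourself note, routine --- the proposal as written establishes nothing beyond what the citation already asserts. To turn it into a proof you would need either to state and prove Wan's lifting lemma (power sums of Teichm\"uller lifts and their $p$-adic valuations) and carry out the deduction, or to substitute Turnwald's elementary argument via the elementary symmetric functions of the multiset $\{f(c):c\in\mathbb{F}_{q}\}$, which avoids $p$-adic lifting entirely. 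As it stands, your attempt rests on exactly the same foundation as the paper's own treatment: an appeal to \citep{Wan1993padic}.
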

\begin{algorithm}[h]
\protect\caption{$\mathbf{PP}$ to examine whether $f(x)=x^{8}+\sum_{i=1}^{7}a_{i}x^{i}$
is a PP over $\mathbb{F}_{q}$}
\label{alg:PP}

\begin{lstlisting}
def PP(q,e,a7,a6,a5,a4,a3,a2,a1):
    V = []
    for j in range(1+int(q-(q-1)/8)):
        x = e^j
        v = x^8+a7*x^7+a6*x^6+a5*x^5+a4*x^4+a3*x^3+a2*x^2+a1*x
        if v in V: return False
        else: V.append(v)
    return True
\end{lstlisting}
\end{algorithm}

\begin{itemize}
\item Note that $(a_{7},a_{6},a_{5},a_{3})\neq(0,0,0,0)$ for $f$ to be
a non-exceptional PP over $\mathbb{F}_{2^{r}}$. 
\item Let $e$ be a generator of the multiplicative group $\mathbb{F}_{2^{r}}^{*}$.
\item For each $a\in\mathbb{F}_{2^{r}}^{*}$, fix an element $\omega(a)$
in the set $\mathbb{F}_{2^{r}}\backslash\{u^{2}+au:u\in\mathbb{F}_{2^{r}}\}$.
\end{itemize}

\subsection{Case $r=4$, $q=16$}

Note that
\begin{align*}
0 & =\mathbf{HC}(4,3,a_{7},a_{6},a_{5},a_{4},a_{3},a_{2},a_{1})=a_{5}^{3}+a_{3}a_{6}^{2}+a_{4}^{2}a_{7}+a_{1}a_{7}^{2},\\
0 & =\mathbf{HC}(4,5,a_{7},a_{6},a_{5},a_{4},a_{3},a_{2},a_{1})=a_{3}^{5}+a_{6}^{5}+a_{2}^{4}a_{7}+a_{2}a_{7}^{4}.
\end{align*}
If $a_{7}=a_{6}=0$, then $a_{5}=a_{3}=0$. As $(a_{7},a_{6},a_{5},a_{3})\neq(0,0,0,0)$,
we have $(a_{7},a_{6})\neq(0,0)$. By $(\mathbf{R1})$, $(a_{7},a_{6})=(1,0)$
or $(0,1)$.

(1) Suppose $(a_{7},a_{6})=(1,0)$. Then $a_{1}=a_{5}^{3}+a_{4}^{2}$
and $a_{3}^{5}=a_{2}^{4}+a_{2}$.

(2) Suppose $(a_{7},a_{6})=(0,1)$. Then $a_{3}^{5}=1$ and $a_{3}=a_{5}^{3}\neq0$.
By $(\mathbf{R2})$, $a_{4}\in\{0,\omega(a_{5})\}$.

\begin{algorithm}[h]
\protect\caption{\label{alg:r=00003D4} Search for non-exceptional PPs of degree $8$
over $\mathbb{F}_{16}$}

\begin{lstlisting}
q = 2^4; F.<e> = GF(q,modulus=x^4+x+1,repr='log')
for a5 in F: ### a7=1,a6=0 ###
    for (a4,a3) in [(a4,a3) for a4 in F for a3 in F]:
        for a2 in [a2 for a2 in F if a3^5==a2^4+a2]:
            a1 = a5^3 + a4^2
            if PP(q,e,1,0,a5,a4,a3,a2,a1):
                print(F(1),0,a5,a4,a3,a2,a1)
for a5 in [a5 for a5 in F if a5!=0]: ### a7=0,a6=1,a5!=0 ###
    Image = [u^2+a5*u for u in F]
    for w in F:
        if w not in Image: break
    for a4 in [0,w]:
        a3 = a5^3
        for (a2,a1) in [(a2,a1) for a2 in F for a1 in F]:
            if PP(q,e,0,1,a5,a4,a3,a2,a1):
                print(0,F(1),a5,a4,a3,a2,a1)
\end{lstlisting}
\end{algorithm}

Therefore, we write Algorithm \ref{alg:r=00003D4} to search for all
non-exceptional PPs of degree $8$ over $\mathbb{F}_{16}$ up to linear
transformations, and run it in SageMath 8.6.
\begin{itemize}
\item In Algorithm \ref{alg:r=00003D4}, we take $e$ as a root of the Conway
polynomial\footnote{The Conway polynomial $C_{p,n}$ is a particular irreducible polynomial
of degree $n$ over $\mathbb{F}_{p}$ named after John H. Conway by
Richard A. Parker, satisfying a certain compatibility condition proposed
by Conway. The Conway polynomial is chosen to be primitive, so that
each of its roots generates the multiplicative group $\mathbb{F}_{p^{n}}$.
See its \href{https://en.wikipedia.org/wiki/Conway_polynomial_(finite_fields)}{wikipedia page}
and Frank L\"{u}beck's webpage ``\href{http://www.math.rwth-aachen.de/~Frank.Luebeck/data/ConwayPol/index.html}{Conway polynomials for finite fields}
for more information.} $x^{4}+x+1$ in $\mathbb{F}_{16}$, which is a generator of the multiplicative
group $\mathbb{F}_{16}^{*}$. In the outputs, an element $e^{i}$
of $\mathbb{F}_{16}$ is represented by the integer $i$ such that
$1\leqslant i\leqslant q-1$, while the zero element of $\mathbb{F}_{16}$
is represented by $0$.
\item The outputs of Algorithm \ref{alg:r=00003D4} are $113$ tuples of
the form $(0,1,a_{5},a_{4},a_{3},a_{2},a_{1})$ such that $a_{3}=a_{5}^{3}\neq0$.
By Proposition \ref{prop:Req}, they are not linearly related to each
other, and correspond to all $113$ linearly related classes of non-exceptional
PPs of degree $8$ over $\mathbb{F}_{16}$.
\end{itemize}
To save space, we will write down the result up to a composition of
the \emph{Frobenius automorphism} ($\mathbb{F}_{2^{r}}\to\mathbb{F}_{2^{r}}$
with $a\mapsto a^{2}$) with itself, as the following Proposition
\ref{prop:Frobenius} indicates.
\begin{prop}
\label{prop:Frobenius} For $1\leqslant r\in\mathbb{Z}$, let $\Gamma(r)$
be a subset of $\mathbb{F}_{2^{r}}^{*}$ such that 
\[
\mathbb{F}_{2^{r}}^{*}=\{a^{2^{j}}:a\in\Gamma(r),\ 0\leqslant j\leqslant r-1\}.
\]
For each $a\in\Gamma(r)$, fix an element $\omega(a)$ in the set
$\mathbb{F}_{2^{r}}\backslash\{u^{2}+au:u\in\mathbb{F}_{2^{r}}\}$.
Let $h(x)=x^{8}+x^{6}+\sum_{i=1}^{5}c_{i}x^{i}\in\mathbb{F}_{2^{r}}[x]$
with all $c_{i}\in\mathbb{F}_{2^{r}}$ and $c_{5}\neq0$. Then $h(x)$
is linearly related to some $x^{8}+x^{6}+\sum_{i=1}^{5}a_{i}^{2^{j}}x^{i}\in\mathbb{F}_{2^{r}}[x]$
with $0\leqslant j\leqslant r-1$, $a_{5}\in\Gamma(r)$, $a_{4}\in\{0,\omega(a)\}$
and all $a_{i}\in\mathbb{F}_{2^{r}}$. Moreover, $h$ is a (non-exceptional)
PP over $\mathbb{F}_{2^{r}}$ if and only if so is $x^{8}+x^{6}+\sum_{i=1}^{5}a_{i}x^{i}$.\end{prop}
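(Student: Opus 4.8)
The plan is to combine the translation argument already used in the $(\mathbf{R2})$ part of the proof of Proposition~\ref{prop:Req} with the observation that applying a power of the Frobenius automorphism to the coefficients of a polynomial preserves the property of being a (non-exceptional) PP. First I would record that Frobenius fact. For $0\leqslant j\leqslant r-1$ and $f(x)=\sum_i b_i x^i\in\mathbb{F}_{2^r}[x]$, write $f^{[j]}(x):=\sum_i b_i^{2^j}x^i\in\mathbb{F}_{2^r}[x]$. Since $\sigma^j\colon y\mapsto y^{2^j}$ is a field automorphism of every $\mathbb{F}_{2^{rm}}$ and the $b_i$ lie in $\mathbb{F}_{2^r}$, one checks directly that $f^{[j]}(y^{2^j})=f(y)^{2^j}$ for all $y$, i.e. $f^{[j]}\circ\sigma^j=\sigma^j\circ f$ on each $\mathbb{F}_{2^{rm}}$. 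As $\sigma^j$ is a bijection there, $f$ permutes $\mathbb{F}_{2^{rm}}$ if and only if $f^{[j]}$ does; hence $f$ is a PP over $\mathbb{F}_{2^r}$ iff $f^{[j]}$ is, and $f$ is exceptional iff $f^{[j]}$ is. So $f$ is a non-exceptional PP iff $f^{[j]}$ is.

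Next, the reduction. Given $h(x)=x^8+x^6+\sum_{i=1}^5 c_i x^i$ with $c_5\ne0$, the computation in the $(\mathbf{R2})$ part of Proposition~\ref{prop:Req} shows that for any $u\in\mathbb{F}_{2^r}$ the translate $h(x+u)-h(u)$ is again of the form $x^8+x^6+c_5x^5+c_4'x^4+\cdots$ with $c_4'=c_4+u^2+c_5u$ (and only lower coefficients otherwise changed), while $V:=\{u^2+c_5u:u\in\mathbb{F}_{2^r}\}$ is an $\mathbb{F}_2$-subspace of index $2$. Now pick $a_5\in\Gamma(r)$ and $0\leqslant j\leqslant r-1$ with $c_5=a_5^{2^j}$. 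Applying $\sigma^j$ to the index-$2$ map $u\mapsto u^2+a_5u$ and using that $\sigma^j$ permutes $\mathbb{F}_{2^r}$, one gets $V=\{w^2+a_5^{2^j}w:w\in\mathbb{F}_{2^r}\}$ and that $\sigma^j$ carries $\mathbb{F}_{2^r}\setminus\{u^2+a_5u:u\in\mathbb{F}_{2^r}\}$ onto $\mathbb{F}_{2^r}\setminus V$; in particular $\omega(a_5)^{2^j}\notin V$, so $\{0,\omega(a_5)^{2^j}\}$ is a transversal of $\mathbb{F}_{2^r}/V$. Choose $u$ so that $c_4'\in\{0,\omega(a_5)^{2^j}\}$, and set $g(x):=\bigl(h(x+u)-h(u)\bigr)^{[r-j]}$, i.e. raise every coefficient to the $2^{r-j}$-th power. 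Then $g(x)=x^8+x^6+a_5x^5+a_4x^4+\cdots$ with $a_5=c_5^{2^{r-j}}\in\Gamma(r)$ and $a_4=(c_4')^{2^{r-j}}\in\{0,\omega(a_5)\}$, and since $\sigma^r$ is the identity on $\mathbb{F}_{2^r}$ we have $h(x+u)-h(u)=g^{[j]}=x^8+x^6+\sum_{i=1}^5 a_i^{2^j}x^i$. Thus $h$ is linearly related to this polynomial, which is exactly the asserted normal form.

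The final equivalence then follows immediately: $h$ is a (non-exceptional) PP iff $g^{[j]}$ is, because linear transformations preserve this property (as noted in Section~\ref{sec:trans}), and $g^{[j]}$ is a (non-exceptional) PP iff $g=x^8+x^6+\sum_{i=1}^5 a_i x^i$ is, by the Frobenius fact established at the outset.

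I do not expect a genuine obstacle here; the only points that need care are bookkeeping the Frobenius exponents modulo $r$ and verifying that untwisting by $\sigma^{-j}$ simultaneously moves $c_5$ into $\Gamma(r)$ \emph{and} keeps the $x^4$-coefficient in $\{0,\omega(a_5)\}$ — that is, that the element $\omega(a_5)$ attached to the representative $a_5\in\Gamma(r)$ is the ``right'' one, which is precisely what the image-permuting property of $\sigma^j$ guarantees.
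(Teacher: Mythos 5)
Your proof is correct and takes essentially the same approach as the paper: both combine the $(\mathbf{R2})$ translation argument with the fact that twisting coefficients by a power of Frobenius preserves (non-exceptional) permutation behaviour, the only difference being that you translate first and untwist second while the paper untwists $h$ to $\varphi=h^{[r-j]}$ first and then translates. Your explicit check that $\sigma^{j}$ carries the complement of $\{u^{2}+a_{5}u\}$ onto the complement of $\{w^{2}+a_{5}^{2^{j}}w\}$ is exactly the point the paper leaves implicit in the phrase ``by the same arguments for $(\mathbf{R2})$.''
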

\begin{proof}
By definition, there exist $a_{5}\in\Gamma(r)$ and $0\leqslant j\leqslant r-1$
such that $c_{5}=a_{5}^{2^{j}}$. Let 
\[
\varphi(x)=x^{8}+x^{6}+a_{5}x^{5}+\sum_{i=1}^{4}c_{i}^{2^{r-j}}x^{i}.
\]
By the same arguments for $(\mathbf{R2})$ of Proposition \ref{prop:Req},
$\varphi(x)$ is linearly related to some $f(x)=x^{8}+x^{6}+a_{5}x^{5}+\sum_{i=1}^{4}a_{i}x^{i}$
with all $a_{i}\in\mathbb{F}_{2^{r}}$ and $a_{4}\in\{0,\omega(a)\}$.
Then $h(x)$ is linearly related to $g(x)=x^{8}+x^{6}+a_{5}^{2^{j}}x^{5}+\sum_{i=1}^{4}a_{i}^{2^{j}}x^{i}$.
For $1\leqslant m\in\mathbb{Z}$, as $\mathbb{F}_{2^{rm}}\to\mathbb{F}_{2^{rm}}$
($a\mapsto a^{2^{j}}$) is a field automorphism, $h$ permutes $\mathbb{F}_{2^{rm}}$
$\Leftrightarrow$ $\varphi$ permutes $\mathbb{F}_{2^{rm}}$ $\Leftrightarrow$
$f$ permutes $\mathbb{F}_{2^{rm}}$ $\Leftrightarrow$ $g$ permutes
$\mathbb{F}_{2^{rm}}$.
\end{proof}
Let us take $\Gamma(4)=\{1,e,e^{3},e^{5},e^{7}\}$. By Proposition
\ref{prop:Frobenius}, up to a composition of the Frobenius automorphism
with itself, we only need to pick up outputting tuples $(0,1,a_{5},a_{4},a_{3},a_{2},a_{1})$
such that $a_{5}\in\{1,e,e^{3},e^{5},e^{7}\}$, which gives the following
theorem.
\begin{thm}
\label{thm:r=00003D4}Let $e$ be a root of the Conway polynomial
$x^{4}+x+1$ in $\mathbb{F}_{16}$. Each non-exceptional PP of degree
8 over $\mathbb{F}_{16}$ is linearly related to a polynomial of the
form $x^{8}+x^{6}+\sum_{i=1}^{5}a_{i}^{2^{j}}x^{i}$, with $j\in\{0,1,2,3\}$
and $(a_{5},a_{4},a_{3},a_{2},a_{1})\in\mathbb{F}_{16}^{5}$ in the
following list: 
\begin{align*}
 & (e,0,e^{3},e^{5},e), &  & (e,0,e^{3},e^{9},e^{10}), &  & (e,e,e^{3},e^{8},e^{11}), &  & (e,e,e^{3},e^{9},e^{12}),\\
 & (e,e,e^{3},e^{11},e^{4}), &  & (e,e,e^{3},e^{12},e^{9}), &  & (e,e,e^{3},e^{13},e^{4}), &  & (e^{3},0,e^{9},e^{5},1),\\
 & (e^{3},0,e^{9},e^{8},e^{8}), &  & (e^{3},e^{2},e^{9},0,e^{12}), &  & (e^{3},e^{2},e^{9},e^{7},e^{12}), &  & (e^{3},e^{2},e^{9},1,e^{11}),\\
 & (e^{5},0,1,e^{3},1), &  & (e^{5},0,1,e^{10},e), &  & (e^{5},0,1,e^{10},e^{4}), &  & (e^{5},0,1,e^{12},1),\\
 & (e^{5},e,1,0,e^{13}), &  & (e^{5},e,1,0,e^{14}), &  & (e^{5},e,1,e^{5},e^{3}), &  & (e^{5},e,1,e^{5},e^{13}),\\
 & (e^{5},e,1,e^{7},e^{7}), &  & (e^{5},e,1,e^{9},e^{6}), &  & (e^{5},e,1,e^{9},e^{7}), &  & (e^{7},0,e^{6},e,e^{7}),\\
 & (e^{7},0,e^{6},e^{2},e^{5}), &  & (e^{7},0,e^{6},e^{9},e^{7}), &  & (e^{7},0,e^{6},e^{9},e^{14}), &  & (e^{7},0,e^{6},e^{11},e^{6}),\\
 & (e^{7},0,e^{6},e^{12},e^{2}), &  & (e^{7},e^{2},e^{6},e,e^{9}), &  & (e^{7},e^{2},e^{6},e^{5},e), &  & (e^{7},e^{2},e^{6},e^{12},e^{10}),\\
 & (1,0,1,e,1), &  & (1,e^{3},1,e,e^{13}), &  & (1,e^{3},1,e^{2},e^{13}).
\end{align*}
\end{thm}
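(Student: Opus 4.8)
The plan is to reduce the problem to a finite, explicitly enumerable search and then invoke the SageMath computation. First I would apply Proposition \ref{prop:Req} to assume, up to linear transformations, that any non-exceptional PP of degree $8$ over $\mathbb{F}_{16}$ has the normalized form $f(x)=x^{8}+\sum_{i=1}^{7}a_{i}x^{i}$ with coefficients subject to $(\mathbf{R1})\sim(\mathbf{R3})$. By Lemma \ref{lem:EP8}, non-exceptionality forces $(a_{7},a_{6},a_{5},a_{3})\neq(0,0,0,0)$, so by $(\mathbf{R1})$ it remains to treat the three sub-cases $(a_{7},a_{6})\in\{(1,0),(0,1),(0,0)\}$.

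Next I would feed these families into Hermite's criterion via Algorithm \ref{alg:HC}. The key identities are the two equations displayed at the start of Case $r=4$: the $k=3$ relation $a_{5}^{3}+a_{3}a_{6}^{2}+a_{4}^{2}a_{7}+a_{1}a_{7}^{2}=0$ and the $k=5$ relation $a_{3}^{5}+a_{6}^{5}+a_{2}^{4}a_{7}+a_{2}a_{7}^{4}=0$. Setting $a_{7}=a_{6}=0$ in these yields $a_{5}=a_{3}=0$, contradicting non-exceptionality; hence $(a_{7},a_{6})=(1,0)$ or $(0,1)$. In the first case the two equations read $a_{1}=a_{5}^{3}+a_{4}^{2}$ and $a_{3}^{5}=a_{2}^{4}+a_{2}$, eliminating $a_{1}$ and sharply restricting $(a_{3},a_{2})$; in the second they read $a_{3}=a_{5}^{3}\neq0$ and $a_{3}^{5}=1$, while $(\mathbf{R2})$ further pins $a_{4}\in\{0,\omega(a_{5})\}$. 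These constraints leave only finitely many tuples $(a_{7},a_{6},a_{5},a_{4},a_{3},a_{2},a_{1})$ to test, which is precisely what the nested loops of Algorithm \ref{alg:r=00003D4} enumerate, each candidate being checked for the permutation property by the $\mathbf{PP}$ routine (whose correctness rests on Lemma \ref{lem:Wan}).

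Running Algorithm \ref{alg:r=00003D4} on SageMath returns the $113$ tuples of shape $(0,1,a_{5},a_{4},a_{3},a_{2},a_{1})$ with $a_{3}=a_{5}^{3}\neq0$; since each satisfies $(\mathbf{R1})\sim(\mathbf{R3})$ and $(a_{7},a_{6},a_{5})\neq(0,0,0)$, the uniqueness clause of Proposition \ref{prop:Req} shows no two are linearly related, so they represent exactly the $113$ linear-equivalence classes of non-exceptional PPs of degree $8$ over $\mathbb{F}_{16}$. Finally I would compress this list using Proposition \ref{prop:Frobenius} with $\Gamma(4)=\{1,e,e^{3},e^{5},e^{7}\}$: every class has a representative with $a_{5}\in\Gamma(4)$, and retaining only these representatives yields the $35$ tuples in the stated list, the remaining classes being recovered by applying a power of the Frobenius automorphism $a\mapsto a^{2}$ coefficientwise.

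The main obstacle is not any single deduction but making the reduction to a finite search watertight and the computation faithful: one must verify that $(\mathbf{R1})\sim(\mathbf{R3})$ together with the two $\mathbf{HC}$ equations genuinely bound every coefficient (so the loops in Algorithm \ref{alg:r=00003D4} are exhaustive), that $\mathbf{PP}$ correctly certifies permutations through Lemma \ref{lem:Wan}, and that the output tuples are pairwise inequivalent — the last point resting squarely on the ``moreover'' part of Proposition \ref{prop:Req}. Checking that the $35$ listed polynomials are themselves non-exceptional PPs is then immediate: they are PPs by the computation, and they are not exceptional by Lemma \ref{lem:EP8} since the coefficient of $x^{6}$ is nonzero.
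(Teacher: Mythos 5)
Your proposal follows the paper's proof essentially step for step: the same two Hermite-criterion equations $\mathbf{HC}(4,3,\dots)$ and $\mathbf{HC}(4,5,\dots)$, the same case split on $(a_{7},a_{6})$, the same exhaustive search via Algorithm \ref{alg:r=00003D4} and $\mathbf{PP}$, the same appeal to the ``moreover'' clause of Proposition \ref{prop:Req} for pairwise inequivalence of the $113$ outputs, and the same Frobenius compression via Proposition \ref{prop:Frobenius}. The one inaccuracy is in the final step: restricting the $113$ tuples to those with $a_{5}\in\Gamma(4)=\{1,e,e^{3},e^{5},e^{7}\}$ leaves $39$ tuples, not $35$, and the paper's remark after the theorem is needed to discard the extra four --- three are coefficientwise Frobenius images of $(1,0,1,e,1)$ and the fourth, $(1,e^{3},1,e^{4},e^{13})$, is linearly related to a Frobenius image of $(1,e^{3},1,e,e^{13})$ --- so without that observation your argument only yields the theorem with a $39$-entry list.
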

\begin{rem*}
Outside the list of Theorem \ref{thm:r=00003D4} are there four other
outputting tuples of Algorithm \ref{alg:r=00003D4} with $a_{5}\in\Gamma(4)=\{1,e,e^{3},e^{5},e^{7}\}$:
\begin{alignat*}{4}
 & (1,0,1,e^{2},1), & \quad & (1,0,1,e^{4},1), & \quad & (1,0,1,e^{8},1), & \quad & (1,e^{3},1,e^{4},e^{13}).
\end{alignat*}
The first three tuples are clearly equivalent to $(1,0,1,e,1)$ up
to a composition of the Frobenius automorphism with itself. The last
tuple gives $g(x)=x^{8}+x^{6}+x^{5}+e^{3}x^{4}+x^{3}+e^{4}x^{2}+e^{13}x$,
linearly related to $g(x+e_{3})+e^{13}=x^{8}+x^{6}+x^{5}+e^{6}x^{4}+x^{3}+e^{2}x^{2}+e^{26}x$,
corresponding to the tuple $(1,e^{6},1,e^{2},e^{26})$ equivalent
to $(1,e^{3},1,e,e^{13})$ up to the Frobenius automorphism. The list
of Theorem \ref{thm:r=00003D4} for $r=4$ is actually complete and
non-repetitive up to compositions of linear transformations and Frobenius
automorphisms.
\end{rem*}

\subsection{Case $r=5$, $q=32$}

As $\mathbf{HC}(5,7,0,0,0,a_{4},a_{3},a_{2},a_{1})=a_{3}^{5}$, so
$a_{3}=0$ if $a_{7}=a_{6}=a_{5}=0$. As $(a_{7},a_{6},a_{5},a_{3})\neq(0,0,0,0)$,
$(a_{7},a_{6},a_{5})\neq(0,0,0)$. By $(\mathbf{R1})$, $(a_{7},a_{6})=(0,0)$,
$(1,0)$ or $(0,1)$.

(1) Suppose $a_{7}=a_{6}=0\neq a_{5}$. By $(\mathbf{R3})$, $a_{5}=1$
and $a_{4}=0$. We have $a_{1}=a_{3}^{5}+a_{3}^{2}$ as 
\[
0=\mathbf{HC}(5,7,0,0,1,0,a_{3},a_{2},a_{1})=a_{3}^{5}+a_{3}^{2}+a_{1}.
\]

(2) Suppose $(a_{7},a_{6})=(1,0)$. Then $\mathbf{HC}(5,5,1,0,a_{5},a_{4},a_{3},a_{2},a_{1})=a_{3}=0$.

(3) Suppose $(a_{7},a_{6})=(0,1)$. By $(\mathbf{R2})$, $a_{4}\in\{0,\omega(a_{5})\}$
if $a_{5}\neq0$; $a_{4}=0$ if $a_{5}=0$.

\begin{algorithm}[h]
\protect\caption{\label{alg:r=00003D5} Search for non-exceptional PPs of degree $8$
over $\mathbb{F}_{32}$}

\begin{lstlisting}
q = 2^5; F.<e> = GF(q,modulus=x^5+x^2+1,repr='log')
for (a3,a2) in [(a3,a2) for a3 in F for a2 in F]:### a7=a6==a4=0,a5=1 ###
    a1 = a3^5+a3^2
    if PP(q,e,0,0,1,0,a3,a2,a1):
        print(0,0,F(1),0,a3,a2,a1)
for a5 in F: ### a7=1,a6=a3=0 ###
    for (a4,a2,a1) in [(a4,a2,a1) for a4 in F for a2 in F for a1 in F]:
        if PP(q,e,1,0,a5,a4,0,a2,a1):
            print(F(1),0,a5,a4,0,a2,a1)
for a5 in F: ### a7=0,a6=1 ###
    if a5==0: A4 = [0]
    else:
        Image = [u^2+a5*u for u in F]
        for w in F:
            if w not in Image: break
        A4 = [0,w]
    for (a4,a3) in [(a4,a3) for a4 in A4 for a3 in F]:
        for (a2,a1) in [(a2,a1) for a2 in F for a1 in F]:
            if PP(q,e,0,1,a5,a4,a3,a2,a1):
                print(0,F(1),a5,a4,a3,a2,a1)
\end{lstlisting}
\end{algorithm}

Therefore, we write Algorithm \ref{alg:r=00003D5} to search for all
non-exceptional PPs of degree $8$ over $\mathbb{F}_{32}$ up to linear
transformations, and run it in SageMath 8.6.
\begin{itemize}
\item The outputs of Algorithm \ref{alg:r=00003D5} are $20$ tuples of
the form $(0,1,a_{5},a_{4},a_{3},a_{2},a_{1})$ such that $a_{5}(a_{3}+a_{5}^{3})\neq0$.
By Proposition \ref{prop:Req}, they are linearly related into exactly
$10$ pairs, which correspond to all $10$ linearly related classes
of non-exceptional PPs of degree $8$ over $\mathbb{F}_{32}$.
\item In the outputs of Algorithm \ref{alg:r=00003D5}, $a_{5}\in\{e,e^{2},e^{4},e^{8},e^{11},e^{13},e^{16},e^{21},e^{22},e^{26}\}$.
By Proposition \ref{prop:Frobenius}, up to a composition of the Frobenius
automorphism with itself, we only need to pick up outputting tuples
$(0,1,a_{5},a_{4},a_{3},a_{2},a_{1})$ such that $a_{5}\in\{e,e^{11}\}$.\end{itemize}
\begin{thm}
\label{thm:r=00003D5}Let $e$ be a root of the Conway polynomial
$x^{5}+x^{2}+1$ in $\mathbb{F}_{32}$. Each non-exceptional PP of
degree $8$ over $\mathbb{F}_{32}$ is linearly related to one of
the following: 
\begin{gather*}
x^{8}+x^{6}+e^{2^{j}}x^{5}+e^{26\cdot2^{j}}x^{3}+e^{25\cdot2^{j}}x^{2},\\
x^{8}+x^{6}+e^{11\cdot2^{j}}x^{5}+e^{2^{j}}x^{4}+e^{29\cdot2^{j}}x^{3}+e^{27\cdot2^{j}}x,
\end{gather*}
with $j$ running through the set $\{0,1,2,3,4\}$.
\end{thm}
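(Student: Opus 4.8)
The plan is to assemble the pieces already assembled in the excerpt and then to carry out and reduce the resulting finite computation. First I would record exactly which normalized polynomials $f(x)=x^{8}+\sum_{i=1}^{7}a_{i}x^{i}$ over $\mathbb{F}_{32}$ can be non-exceptional PPs. By Proposition \ref{prop:Req} we may impose $(\mathbf{R1})\sim(\mathbf{R3})$, and by the exceptionality criterion (Lemma \ref{lem:EP8}) together with $\mathbf{HC}(5,7,0,0,0,a_{4},a_{3},a_{2},a_{1})=a_{3}^{5}$ we get $(a_{7},a_{6},a_{5})\neq(0,0,0)$; hence $(\mathbf{R1})$ splits the search into the three disjoint cases $(a_{7},a_{6})\in\{(0,0),(1,0),(0,1)\}$. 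In the first case $(\mathbf{R3})$ forces $a_{5}=1$, $a_{4}=0$, and the Hermite relation $\mathbf{HC}(5,7,0,0,1,0,a_{3},a_{2},a_{1})=0$ pins down $a_{1}=a_{3}^{5}+a_{3}^{2}$; in the second case $\mathbf{HC}(5,5,1,0,a_{5},a_{4},a_{3},a_{2},a_{1})=a_{3}$ forces $a_{3}=0$; in the third case $(\mathbf{R2})$ restricts $a_{4}$ to a set of size $\leqslant 2$ depending on $a_{5}$. These are precisely the loop ranges of Algorithm \ref{alg:r=00003D5}.

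Next I would run Algorithm \ref{alg:r=00003D5} in SageMath: its three loops enumerate exactly the tuples surviving the three cases above, and for each candidate the routine $\mathbf{PP}$ — correct by Wan's bound, Lemma \ref{lem:Wan} — decides whether $x^{8}+\sum a_{i}x^{i}$ permutes $\mathbb{F}_{32}$. The output is $20$ tuples, each of the shape $(0,1,a_{5},a_{4},a_{3},a_{2},a_{1})$ with $a_{5}(a_{3}+a_{5}^{3})\neq 0$; in particular neither the $(a_{7},a_{6})=(1,0)$ branch nor the $a_{7}=a_{6}=0$ branch produces a non-exceptional PP. By Proposition \ref{prop:Req}$(ii)$, the substitution $g(x)=f(x+a_{5})-f(a_{5})$ pairs these $20$ tuples two by two, so they represent exactly $10$ linear-equivalence classes of non-exceptional PPs of degree $8$ over $\mathbb{F}_{32}$.

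Finally I would compress the list using Proposition \ref{prop:Frobenius}. The ten values of $a_{5}$ occurring in the output constitute two orbits under the Frobenius map $a\mapsto a^{2}$, namely $\{e,e^{2},e^{4},e^{8},e^{16}\}$ and $\{e^{11},e^{22},e^{13},e^{26},e^{21}\}$, represented by $e$ and $e^{11}$. Hence, up to composing a linear transformation with a power of the Frobenius automorphism, it suffices to retain the two orbit representatives with $a_{5}\in\{e,e^{11}\}$; reading them off the SageMath output yields the polynomials $x^{8}+x^{6}+e^{2^{j}}x^{5}+e^{26\cdot 2^{j}}x^{3}+e^{25\cdot 2^{j}}x^{2}$ and $x^{8}+x^{6}+e^{11\cdot 2^{j}}x^{5}+e^{2^{j}}x^{4}+e^{29\cdot 2^{j}}x^{3}+e^{27\cdot 2^{j}}x$ for $j\in\{0,1,2,3,4\}$, and Proposition \ref{prop:Frobenius} guarantees that every non-exceptional PP of degree $8$ over $\mathbb{F}_{32}$ is linearly related to one of them.

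I expect the main obstacle to be bookkeeping rather than mathematics. One must check that the three loops of Algorithm \ref{alg:r=00003D5} are jointly exhaustive — that every admissible configuration of $(a_{7},a_{6},a_{5},a_{4})$ is covered and that the Hermite-derived substitutions for $a_{1}$ and for $a_{3}$ are exactly the right ones — and that the two-stage reduction (first by Proposition \ref{prop:Req}$(ii)$, then by Proposition \ref{prop:Frobenius}) is applied so as to be simultaneously complete, with no equivalence class dropped, and non-redundant. The SageMath run itself is routine; the delicate point is auditing that the printed tuples genuinely fall into the Frobenius orbits of the two stated representatives.
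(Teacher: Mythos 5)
Your proposal is correct and follows essentially the same route as the paper: the same Hermite-derived constraints ($a_3^5$ forcing $(a_7,a_6,a_5)\neq(0,0,0)$, the substitution $a_1=a_3^5+a_3^2$ in the $(0,0)$ branch, $a_3=0$ in the $(1,0)$ branch, and $(\mathbf{R2})$ in the $(0,1)$ branch), the same exhaustive SageMath search yielding $20$ tuples paired into $10$ classes by Proposition \ref{prop:Req}$(ii)$, and the same Frobenius-orbit compression to the representatives $a_5\in\{e,e^{11}\}$. No gaps; this matches the paper's argument.
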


\subsection{Case $r=6$, $q=64$}

First, $a_{7}=0$ as $\mathbf{HC}(6,9,a_{7},a_{6},a_{5},a_{4},a_{3},a_{2},a_{1})=a_{7}^{9}$.
Note that 
\begin{align*}
0 & =\mathbf{HC}(6,11,0,a_{6},a_{5},a_{4},a_{3},a_{2},a_{1})=a_{5}^{3}a_{6}^{8}+a_{3}a_{6}^{10}=a_{6}^{8}(a_{5}^{3}+a_{3}a_{6}^{2}),\\
0 & =\mathbf{HC}(6,21,0,a_{6},a_{5},a_{4},a_{3},a_{2},a_{1})=a_{3}^{21}+a_{6}^{21}.
\end{align*}
Note the relations: $a_{3}\neq0\Leftrightarrow a_{6}\neq0\Rightarrow a_{5}^{3}=a_{3}a_{6}^{2}\neq0$.
Recall that $(a_{6},a_{5},a_{3})\neq(0,0,0)$. If $a_{6}=0$, then
$a_{3}=0\neq a_{5}$. In any case, we have $a_{5}\neq0$. By $(\mathbf{R}1)$,
$(a_{7},a_{6})=(0,0)$, or $(0,1)$.

(1) Suppose $(a_{7},a_{6})=(0,0)$. Already $a_{3}=0\neq a_{5}$.
By $(\mathbf{R3})$, $a_{4}=0$ and $a_{5}\in\{1,e,e^{2}\}$.

(2) Suppose $(a_{7},a_{6})=(0,1)$. As mentioned above, $a_{3}=a_{5}^{3}\neq0$.
By $(\mathbf{R2})$, $a_{4}\in\{0,\omega(a_{5})\}$.

\begin{algorithm}[h]
\protect\caption{\label{alg:r=00003D6} Search for non-exceptional PPs of degree $8$
over $\mathbb{F}_{64}$}

\begin{lstlisting}
q = 2^6; F.<e> = GF(q,modulus=x^6+x^4+x^3+x+1,repr='log')
for a5 in [F(1),e,e^2]: ### a7=a6==a4=a3=0!=a5 ###
    for (a2,a1) in [(a2,a1) for a2 in F for a1 in F]:
        if PP(q,e,0,0,a5,0,0,a2,a1):
            print(0,0,a5,0,0,a2,a1)
for a5 in [a5 for a5 in F if a5!=0]: ### a7=0,a6=1,a5!=0 ###
    Image = [u^2+a5*u for u in F]
    for w in F:
        if w not in Image: break
    for a4 in [0,w]:
        a3 = a5^3
        for (a2,a1) in [(a2,a1) for a2 in F for a1 in F]:
            if PP(q,e,0,1,a5,a4,a3,a2,a1):
                print(0,F(1),a5,a4,a3,a2,a1)
\end{lstlisting}
\end{algorithm}

Therefore, we write Algorithm \ref{alg:r=00003D6} to search for all
non-exceptional PPs of degree $8$ over $\mathbb{F}_{64}$ up to linear
transformations, and run it in SageMath 8.6. The outputs can be reworded
as the following theorem.
\begin{thm}
\label{thm:r=00003D6}Let $e$ be a root of the Conway polynomial
$x^{6}+x^{4}+x^{3}+x+1$ in $\mathbb{F}_{64}$. Each non-exceptional
PP of degree $8$ over $\mathbb{F}_{64}$ is linearly related to one
of the following: 
\begin{gather*}
x^{8}+ex^{5}+e^{2}x^{2},\quad x^{8}+e^{2}x^{5}+e^{4}x^{2},\quad x^{8}+x^{6}+x^{5}+e^{3}x^{4}+x^{3}+e^{14}x^{2}+e^{6}x.
\end{gather*}

\end{thm}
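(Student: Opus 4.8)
The plan is to mirror the treatment of the cases $r=4,5$: first use Hermite's criterion together with the normalization $(\mathbf{R1})\sim(\mathbf{R3})$ to cut the coefficient vector down to a finite, explicitly parametrized candidate set; then run the SageMath search of Algorithm~\ref{alg:r=00003D6} over that set; and finally collapse the output modulo linear transformations to obtain a non-repetitive list.

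First I would record the structural reduction already begun in this subsection. Lemma~\ref{lem:HC} with $k=9,11,21$ yields $a_7^9=0$, $a_6^8(a_5^3+a_3a_6^2)=0$ and $a_3^{21}=a_6^{21}$; since non-exceptionality forces $(a_7,a_6,a_5,a_3)\neq(0,0,0,0)$ and already $a_7=0$, these relations give $a_5\neq0$ throughout and $a_6\neq0\Leftrightarrow a_3\neq0$ (with $a_5^3=a_3a_6^2$ in that case). By $(\mathbf{R1})$ only two shapes remain: family~(1), $(a_7,a_6)=(0,0)$, where $a_3=0\neq a_5$ and $(\mathbf{R3})$ imposes $a_4=0$, $a_5\in\{1,e,e^2\}$, with $(a_2,a_1)\in\mathbb{F}_{64}^2$ free; and family~(2), $(a_7,a_6)=(0,1)$, where $a_3=a_5^3\neq0$ and $(\mathbf{R2})$ imposes $a_4\in\{0,\omega(a_5)\}$, with $a_5\in\mathbb{F}_{64}^*$ and $(a_2,a_1)\in\mathbb{F}_{64}^2$ free. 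Both families are finite, so Algorithm~\ref{alg:r=00003D6}, which enumerates exactly them and tests each via $\mathbf{PP}$ (Algorithm~\ref{alg:PP}, justified by Wan's Lemma~\ref{lem:Wan}), terminates; by Proposition~\ref{prop:Req} and Lemma~\ref{lem:EP8} its printed polynomials form a complete list of representatives of the linear-equivalence classes of non-exceptional PPs of degree $8$ over $\mathbb{F}_{64}$, possibly with repetitions within a class.

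Next I would remove those repetitions using Proposition~\ref{prop:Req}(ii)--(iii). In family~(2) the only coincidence the proposition allows is $g(x)=f(x+a_5)-f(a_5)$ under the hypothesis $a_5(a_3+a_5^3)\neq0$; but $a_3=a_5^3$ here, so $a_3+a_5^3=0$ in characteristic $2$, and no two distinct printed polynomials of family~(2) are linearly related. In family~(1) the only admissible twist is $g(x)=t^{-8}f(tx)$ with $t^3=1$, which leaves $a_5$ unchanged (as $a_5'=t^{-3}a_5=a_5$) and forces $u=0$, so again distinct outputs are inequivalent. Hence, after reading off the SageMath output and rewriting it, one should obtain precisely the three polynomials of the statement: the two trinomials come from family~(1) (they are interchanged by the coefficientwise Frobenius $a\mapsto a^2$, hence both permute $\mathbb{F}_{64}$, but as just argued they are not linearly related, so both must be listed), and the seven-term polynomial with the $x^6$ term comes from family~(2).

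The hard part is not conceptual but a matter of trusting, and sanity-checking, the computation: one must verify that the loops in Algorithm~\ref{alg:r=00003D6} really run over the candidate set described above with no range or indexing slip, that $\mathbb{F}_{64}$ is realized by the stated Conway polynomial so that the printed logarithms mean what they should, and that the reduction modulo linear transformations was carried out correctly. A natural check is to re-run the search with the restrictions on $a_4$ and $a_5$ coming from $(\mathbf{R2})/(\mathbf{R3})$ relaxed, and confirm that the additional solutions are exactly the linear images predicted by Proposition~\ref{prop:Req}.
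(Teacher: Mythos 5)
Your proposal follows the paper's proof essentially verbatim: the same Hermite exponents $k=9,11,21$ force $a_{7}=0$, $a_{5}\neq0$, and the dichotomy between the family $(a_{7},a_{6})=(0,0)$ with $a_{3}=a_{4}=0$, $a_{5}\in\{1,e,e^{2}\}$ and the family $(a_{7},a_{6})=(0,1)$ with $a_{3}=a_{5}^{3}$, $a_{4}\in\{0,\omega(a_{5})\}$, after which Algorithm \ref{alg:r=00003D6} exhausts the finite candidate set and the theorem records its output. One small caveat: your claim that distinct outputs in family (1) are automatically inequivalent is not justified, since Proposition \ref{prop:Req}(iii) explicitly allows $g=t^{-8}f(tx)\neq f$ with $t^{3}=1$ to be two linearly related normalized forms (here $a_{5}'=a_{5}$, $a_{2}'=a_{2}$ but $a_{1}'=t^{2}a_{1}$, so duplicates arise whenever $a_{1}\neq0$); this only affects non-repetitiveness of the raw output, not the completeness assertion that the theorem actually makes.
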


\subsection{Case $r=7$, $q=128$}
\begin{thm}
All PPs of degree $8$ over $\mathbb{F}_{128}$ are exceptional.\end{thm}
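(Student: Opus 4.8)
The plan is to follow the same template as the cases $r=4,5,6$, except that now the goal is to prove the list of surviving candidates is \emph{empty}. Suppose for contradiction that $f$ is a non-exceptional PP of degree $8$ over $\mathbb{F}_{128}$. By Proposition \ref{prop:Req} we may take $f(x)=x^{8}+\sum_{i=1}^{7}a_{i}x^{i}$ with coefficients subject to $(\mathbf{R1})\sim(\mathbf{R3})$, and since a linearized PP is exceptional, $(a_{7},a_{6},a_{5},a_{3})\neq(0,0,0,0)$. First I would dispose of the branch $a_{7}=a_{6}=a_{5}=0$ exactly as in the $r=5$ and $r=6$ cases: choosing a small odd $k$ for which $\mathbf{HC}(7,k,0,0,0,a_{4},a_{3},a_{2},a_{1})$ reduces to a pure power of $a_{3}$ (compare $\mathbf{HC}(5,7,0,0,0,\dots)=a_{3}^{5}$), so that the equation $\mathbf{HC}(7,k,\dots)=0$ forces $a_{3}=0$, contradicting $(a_{7},a_{6},a_{5},a_{3})\neq(0,0,0,0)$. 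Hence $(a_{7},a_{6},a_{5})\neq(0,0,0)$, and by $(\mathbf{R1})$ it remains to treat $(a_{7},a_{6})\in\{(1,0),(0,1),(0,0)\}$.

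For each of these three branches I would call Algorithm \ref{alg:HC} over the polynomial ring $\mathbb{F}_{2}[a_{1},\dots,a_{7}]$ for the odd exponents $k$ with $17\leqslant k\leqslant 125$ (these are exactly the $k$ coprime to $q=128$ with $1\leqslant k\leqslant q-2$ for which the sum is nonempty), taking the small values first since they yield the sparsest identities, and record the equations $\mathbf{HC}(7,k,\dots)=0$. In the $(0,1)$ branch the leading equations should reproduce a relation of the form $a_{3}=a_{5}^{3}$ (as in the $r=6$ case), and $(\mathbf{R2})$ restricts $a_{4}$ to at most two values; in the $(0,0)$ branch $(\mathbf{R3})$ forces $a_{4}=0$ and $a_{5}=1$ because $7$ is odd, leaving only $a_{3},a_{2},a_{1}$; in the $(1,0)$ branch I expect an early equation to force $a_{3}=0$ (compare $\mathbf{HC}(5,5,1,0,\dots)=a_{3}$). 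Then, within each branch, I would feed the collected Hermite equations together with the constraints $(\mathbf{R1})\sim(\mathbf{R3})$ and the field relations $a_{i}^{128}=a_{i}$ into a Gr\"obner basis / elimination computation in SageMath, aiming to cut the candidate set down either to the empty set (the system is already inconsistent over $\mathbb{F}_{128}$) or to a short explicit list, each entry of which is then refuted by $\mathbf{PP}(128,e,\dots)$ of Algorithm \ref{alg:PP}. Proposition \ref{prop:Frobenius} can be used in addition to quotient by the Frobenius action, so that only $a_{5}$ ranging over a set of orbit representatives needs to be considered.

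I expect the $(0,1)$ and $(0,0)$ branches to be comparatively painless: after the reductions above each has only two or three genuinely free coefficients, so even a direct loop over $\mathbb{F}_{128}$ (a few million tuples, each tested in about $q-(q-1)/8$ field operations by Lemma \ref{lem:Wan}) is feasible, and the Hermite equations should shrink this by orders of magnitude. The main obstacle will be the $(a_{7},a_{6})=(1,0)$ branch: a naive search is out of reach there since $|\mathbb{F}_{128}|^{5}\approx 3.4\times10^{10}$, so the Hermite equations must do essentially all of the work, and one has to be careful that the identities produced by $\mathbf{HC}$ have coefficients in $\mathbb{F}_{2}$ while the unknowns live in $\mathbb{F}_{128}$ — the relations $a_{i}^{128}=a_{i}$ must be combined in to force the extra vanishing needed to bring the parameter count down to something a final test can handle. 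The conclusion is that once this is carried out, no tuple survives in any branch, whence there is no non-exceptional PP of degree $8$ over $\mathbb{F}_{128}$, i.e. every such PP is exceptional.
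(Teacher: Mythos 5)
Your overall strategy --- normalize via Proposition \ref{prop:Req}, harvest equations from Algorithm \ref{alg:HC}, and eliminate coefficients until a finite check finishes the job --- is exactly the paper's, but as written this is a plan rather than a proof: every decisive step is phrased in the conditional (``I would record the equations'', ``aiming to cut the candidate set down''), and the conclusion that no tuple survives is asserted rather than demonstrated. The burden of this theorem is precisely to exhibit the eliminations and final verifications, and several of your concrete predictions about how they go are off. The branch $(a_{7},a_{6})=(0,0)$ does not survive to a search over $(a_{3},a_{2},a_{1})$ with $a_{5}=1$: already $\mathbf{HC}(7,23,0,0,a_{5},a_{4},a_{3},a_{2},a_{1})=a_{5}^{19}$ forces $a_{5}=0$, and $\mathbf{HC}(7,29,0,0,0,a_{4},a_{3},a_{2},a_{1})=a_{3}^{21}$ then kills the branch outright. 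In the branch $(a_{7},a_{6})=(0,1)$ with $a_{5}\neq0$, the Hermite equations give $a_{3}a_{5}^{64}(a_{5}^{3}+a_{3})^{4}=0$, i.e.\ $a_{3}\in\{0,a_{5}^{3}\}$ and not simply $a_{3}=a_{5}^{3}$; the subcase $a_{3}=0$ is the hardest point of the entire proof, requiring a chain of eliminations that pins $a_{5}$ down to a root of $x^{7}+x+1$ before $\mathbf{HC}(7,31,\dots)$ delivers the contradiction. And in the branch $(a_{7},a_{6})=(1,0)$ no early equation forces $a_{3}=0$ (that happens for $r=5$ and $r=8$, not $r=7$); instead one solves for $a_{1},a_{2},a_{4}$ in terms of $(a_{3},a_{5})$ and finishes with a loop over the $128\cdot 127$ remaining pairs, which is how the intractable $|\mathbb{F}_{128}|^{5}$ search you rightly worry about is actually avoided.

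A second, smaller gap: your plan depends on the elimination reaching either an inconsistent system or a short explicit list to hand to $\mathbf{PP}$, but you give no reason this must happen, and in two subcases the paper closes the argument by a different device, namely an explicit collision rather than a search: $f(t+1)=f(t)$ for the one-parameter family $f(x)=x^{8}+x^{6}+t^{2}x^{3}+(t^{4}+t^{3})x^{2}+t^{4}x$ arising when $(a_{7},a_{6},a_{5})=(0,1,0)$, and $f(a_{5})=f(0)$ for the two-parameter family arising when $a_{3}=a_{5}^{3}$. Without either such collisions or an actually completed elimination, the candidate set in those subcases is still infinite-looking (one or two free coefficients over $\mathbb{F}_{128}$ is searchable, but you have not shown the reduction that gets you there), so the argument is not yet a proof of the statement.
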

\begin{proof}
Let us prove it by reduction to absurdity. Suppose that $f$ is a
non-exceptional PP of degree $8$ over $\mathbb{F}_{128}$. Without
loss of generality, we can assume that $f(x)=x^{8}+\sum_{i=1}^{7}a_{i}x^{i}$
with all $a_{i}\in\mathbb{F}_{128}$ satisfying requirements $(\mathbf{R1})$
and $(\mathbf{R2})$ of Proposition \ref{prop:Req}. As we mentioned
in the introduction section, a linearized PP must be exceptional.
In particular, we note that: \begin{itemize} 

\item $(a_{7},a_{6},a_{5},a_{3})\neq(0,0,0,0)$;

\item $(a_{7},a_{6})\in\{(1,0),(0,1),(0,0)\}$;

\item if $(a_{7},a_{6},a_{5})=(0,1,0)$ then $a_{4}=0$.

\end{itemize}

Note that
\begin{alignat*}{2}
 & \mathbf{HC}(7,23,0,0,a_{5},a_{4},a_{3},a_{2},a_{1}) &  & =a_{5}^{19},\\
 & \mathbf{HC}(7,29,0,0,0,a_{4},a_{3},a_{2},a_{1}) &  & =a_{3}^{21}.
\end{alignat*}
If $a_{7}=a_{6}=0$, then $a_{5}=a_{3}=0$. So $(a_{7},a_{6})\neq(0,0)$.
Thus $(a_{7},a_{6})=(0,1)$ or $(1,0)$.

(1) Suppose $(a_{7},a_{6},a_{5})=(0,1,0)$. Then $a_{4}=0$. Note
that 
\begin{alignat*}{2}
 & \mathbf{HC}(7,43,0,1,0,0,0,a_{2},a_{1}) &  & =a_{2},\\
 & \mathbf{HC}(7,55,0,1,0,0,0,0,a_{1}) &  & =a_{1}^{16}.
\end{alignat*}
If $a_{3}=0$, then $a_{2}=a_{1}=0$, and $f(x)=x^{8}+x^{6}$ is not
a PP. So $a_{3}\neq0$. Also note that 
\begin{align*}
0 & =\mathbf{HC}(7,23,0,1,0,0,a_{3},a_{2},a_{1})=a_{3}^{5}+a_{2}^{2}a_{3}+a_{1}a_{3}^{2},\\
0 & =\mathbf{HC}(7,27,0,1,0,0,a_{3},a_{2},a_{1})=a_{3}^{17}+a_{2}^{2}a_{3}^{9}+a_{1}a_{3}^{10}+a_{2}^{8}a_{3},\\
0 & =\mathbf{HC}(7,23,0,1,0,0,a_{3},a_{2},a_{1})\cdot a_{3}^{8}+\mathbf{HC}(7,27,0,1,0,0,a_{3},a_{2},a_{1})\\
 & =a_{3}^{17}+a_{3}^{13}+a_{2}^{8}a_{3}=a_{3}(a_{3}^{4}+a_{3}^{3}+a_{2}^{2})^{4}.
\end{align*}
Let $a_{3}=t^{2}$ with $t\in\mathbb{F}_{128}^{*}$. Then $a_{2}=t^{4}+t^{3}$,
$a_{1}=a_{3}^{3}+a_{2}^{2}a_{3}^{-1}=t^{4}$, and 
\[
f(x)=x^{8}+x^{6}+t^{2}x^{3}+(t^{4}+t^{3})x^{2}+t^{4}x.
\]
Note that $f(t+1)=t^{8}+t^{5}=f(t)$. So $f$ is not a PP over $\mathbb{F}_{128}$.

(2) Suppose $(a_{7},a_{6})=(0,1)$ and $a_{5}\neq0$. Note that 
\begin{align*}
0 & =\mathbf{HC}(7,23,0,1,a_{5},a_{4},a_{3},a_{2},a_{1})^{4}\text{\ensuremath{\cdot}}a_{3}+\mathbf{HC}(7,29,0,1,a_{5},a_{4},a_{3},a_{2},a_{1})\\
 & =a_{3}a_{5}^{76}+a_{3}^{5}a_{5}^{64}=a_{3}a_{5}^{64}(a_{5}^{3}+a_{3})^{4}.
\end{align*}
As $a_{5}\neq0$, we have $a_{3}\in\{0,a_{5}^{3}\}$. \begin{itemize} 

\item Suppose $a_{3}=0$. Note that
\begin{align*}
0 & =\mathbf{HC}(7,27,0,1,a_{5},a_{4},0,a_{2},a_{1})+\mathbf{HC}(7,43,0,1,a_{5},a_{4},0,a_{2},a_{1})^{8}\cdot a_{5}^{3}\\
 & =a_{2}^{256}a_{5}^{27}+a_{5}^{67}+a_{1}^{2}a_{5}^{25}=a_{2}^{2}a_{5}^{27}+a_{5}^{67}+a_{1}^{2}a_{5}^{25}.
\end{align*}
So $a_{1}=a_{5}^{21}+a_{2}a_{5}$. Also note that 
\begin{align*}
0 & =\mathbf{HC}(7,23,0,1,a_{5},a_{4},0,a_{2},a_{5}^{21}+a_{2}a_{5})\\
 & =a_{5}^{43}+a_{5}^{27}+a_{5}^{19}+a_{2}a_{5}^{7}+a_{4}^{4}a_{5}^{3}+a_{2}^{2}a_{5}^{3}.
\end{align*}
So $a_{4}=a_{2}^{64}+a_{2}^{32}a_{5}+a_{5}^{10}+a_{5}^{6}+a_{5}^{4}$.
Then  
\begin{align*}
0 & =\mathbf{HC}(7,27,0,1,a_{5},a_{2}^{64}+a_{2}^{32}a_{5}+a_{5}^{10}+a_{5}^{6}+a_{5}^{4},0,a_{2},a_{5}^{21}+a_{2}a_{5})\\
 & =a_{2}^{1024}a_{5}^{3}+a_{2}^{256}a_{5}^{27}+a_{5}^{163}+a_{5}^{67}+a_{5}^{51}+a_{2}^{2}a_{5}^{27}+a_{2}^{8}a_{5}^{3}\\
 & =a_{5}^{67}+a_{5}^{51}+a_{5}^{36}=a_{5}^{35}(a_{5}^{32}+a_{5}^{16}+a_{5}).
\end{align*}
and $0=(a_{5}^{32}+a_{5}^{16}+a_{5})^{8}=a_{5}^{2}+a_{5}+a_{5}^{8}$.
So $a_{5}$ is a root of the Conway polynomial $x^{7}+x+1$ in $\mathbb{F}_{128}$,
and a generator of the multiplicative group $\mathbb{F}_{128}^{*}$.
The following codes take an arbitrary root $e$ of $x^{7}+x+1$ as
the inputting value of $a_{5}$. 
\begin{lstlisting}
F.<e> = GF(2^7,modulus=x^7+x+1)
L.<a2> = PolynomialRing(F)
HC(7,31,0,1,e,a2^64+a2^32*e+e^10+e^6+e^4,0,a2,e^21+a2*e)%(a2^128-a2)
\end{lstlisting}
Here ``\texttt{\%}'' is the modulus operator, giving the reduction
of a polynomial (in $\mathbb{F}_{128}[a_{2}]$) modulo $a_{2}^{128}-a_{2}$.
However, the output is a non-zero constant $e^{2}+e$ in $\mathbb{F}_{128}$.
This makes a contradiction.

\item Suppose $a_{3}=a_{5}^{3}$. Note that 
\begin{align*}
0 & =\mathbf{HC}(7,23,0,1,a_{5},a_{4},a_{5}^{3},a_{2},a_{1})\\
 & =a_{5}^{15}+a_{5}^{11}+a_{4}^{2}a_{5}^{7}+a_{2}^{2}a_{5}^{3}+a_{1}^{2}a_{5}\\
 & =a_{5}(a_{5}^{7}+a_{5}^{5}+a_{4}a_{5}^{3}+a_{2}a_{5}+a_{1})^{2}.
\end{align*}
So $a_{1}=a_{5}^{7}+a_{5}^{5}+a_{4}a_{5}^{3}+a_{2}a_{5}$, and  
\[
f(x)=x^{8}+x^{6}+a_{5}x^{5}+a_{4}x^{4}+a_{5}^{3}x^{3}+a_{2}x^{2}+(a_{5}^{7}+a_{5}^{5}+a_{4}a_{5}^{3}+a_{2}a_{5})x.
\]
Note that $f(a_{5})=0=f(0)$ with $a_{5}\neq0$. So $f$ is not a
PP over $\mathbb{F}_{128}$. \end{itemize} 

(3) Suppose $(a_{7},a_{6})=(1,0)$. Note that 
\begin{align*}
0=\mathbf{HC}(7,19,1,0,a_{5},a_{4},a_{3},a_{2},a_{1}) & =a_{5}^{3}+a_{4}^{2}+a_{1},\\
0=\mathbf{HC}(7,37,1,0,a_{5},a_{4},a_{3},a_{2},a_{1}) & =a_{3}^{33}+a_{2}.
\end{align*}
So $a_{1}=a_{5}^{3}+a_{4}^{2}$ and $a_{2}=a_{3}^{33}$. Also note
that  
\[
0=\mathbf{HC}(7,27,1,0,a_{5},a_{4},a_{3},a_{3}^{33},a_{5}^{3}+a_{4}^{2})=a_{4}^{16}a_{5}^{8}+a_{3}^{8}a_{5}^{16}+a_{3}^{16}=(a_{4}^{2}a_{5}+a_{3}a_{5}^{2}+a_{3}^{2})^{8}.
\]
So $a_{4}^{2}a_{5}+a_{3}a_{5}^{2}+a_{3}^{2}=0$. Thus $a_{4}=a_{3}^{64}a_{5}^{64}+a_{3}a_{5}^{63}$,
and $a_{1}=a_{5}^{3}+a_{3}a_{5}+a_{3}^{2}a_{5}^{126}$. Then $(a_{3},a_{5})\in\mathbb{F}_{128}\times\mathbb{F}_{128}^{*}$
is a common solution of three equations: 
\[
\left\{ \begin{aligned}\mathbf{HC}(7,23,1,0,a_{5},a_{3}^{64}a_{5}^{64}+a_{3}a_{5}^{63},a_{3},a_{3}^{33},a_{5}^{3}+a_{3}a_{5}+a_{3}^{2}a_{5}^{126}) & =0,\\
\mathbf{HC}(7,29,1,0,a_{5},a_{3}^{64}a_{5}^{64}+a_{3}a_{5}^{63},a_{3},a_{3}^{33},a_{5}^{3}+a_{3}a_{5}+a_{3}^{2}a_{5}^{126}) & =0,\\
\mathbf{HC}(7,31,1,0,a_{5},a_{3}^{64}a_{5}^{64}+a_{3}a_{5}^{63},a_{3},a_{3}^{33},a_{5}^{3}+a_{3}a_{5}+a_{3}^{2}a_{5}^{126}) & =0,
\end{aligned}
\right.
\]
which can be solved by the following SageMath codes:

\begin{lstlisting}
F.<e> = GF(2^7,modulus=x^7+x+1)
M.<a3,a5> = PolynomialRing(GF(2))
h23 = HC(7,23,1,0,a5,a3^64*a5^64+a3*a5^63,a3,a3^33,a5^3+a3*a5+a3^2*a5^126)
h29 = HC(7,29,1,0,a5,a3^64*a5^64+a3*a5^63,a3,a3^33,a5^3+a3*a5+a3^2*a5^126)
h31 = HC(7,31,1,0,a5,a3^64*a5^64+a3*a5^63,a3,a3^33,a5^3+a3*a5+a3^2*a5^126)
for (b,c) in [(b,c) for b in F for c in F if c!=0]:
    if h23(b,c)==h29(b,c)==h31(b,c)==0:
        print(b,c)
\end{lstlisting}
Nothing is printed in the output, which means that these three equations
on $(a_{3},a_{5})$ have no common solution in $\mathbb{F}_{128}\times\mathbb{F}_{128}^{*}$.
This makes a contradiction.
\end{proof}

\subsection{Case $r=8$, $q=256$}
\begin{thm}
All PPs of degree $8$ over $\mathbb{F}_{256}$ are exceptional.\end{thm}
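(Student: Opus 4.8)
The plan is to argue by reduction to absurdity, in parallel with the proof for $r=7$. Suppose $f$ is a non-exceptional PP of degree $8$ over $\mathbb{F}_{256}$. By Proposition \ref{prop:Req} we may assume $f(x)=x^{8}+\sum_{i=1}^{7}a_{i}x^{i}$ with all $a_{i}\in\mathbb{F}_{256}$ satisfying $(\mathbf{R1})$ and $(\mathbf{R2})$ (the requirement $(\mathbf{R3})$ is available too, and may shorten the $(a_{7},a_{6})=(0,0)$ branch). Since a linearized PP is exceptional by Lemma \ref{lem:EP8}, we have $(a_{7},a_{6},a_{5},a_{3})\neq(0,0,0,0)$ and $(a_{7},a_{6})\in\{(1,0),(0,1),(0,0)\}$. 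First I would run Algorithm \ref{alg:HC} to locate small $k$ for which $\mathbf{HC}(8,k,0,0,a_{5},a_{4},a_{3},a_{2},a_{1})$ and $\mathbf{HC}(8,k,0,0,0,a_{4},a_{3},a_{2},a_{1})$ collapse to pure powers of $a_{5}$ and of $a_{3}$ respectively; this forces $a_{5}=a_{3}=0$ whenever $a_{7}=a_{6}=0$, which kills that branch (and failing such a clean identity, one invokes $(\mathbf{R3})$ to reduce $a_{5}$ to $\{1,e,e^{2}\}$ and runs a short direct search as in the $r=6$ case). There remain the two branches $(a_{7},a_{6})=(0,1)$ and $(a_{7},a_{6})=(1,0)$.

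For $(a_{7},a_{6})=(0,1)$ I would split off the subcase $a_{5}=0$ (hence $a_{4}=0$): a few $\mathbf{HC}$ identities should express $a_{2}$ and $a_{1}$ through $a_{3}$, and a further identity should either force $a_{3}=0$, making $f(x)=x^{8}+x^{6}$, which is not a PP, or pin $f$ down to a one-parameter family in which an explicit coincidence of values $f(t)=f(t+1)$ can be exhibited, exactly as in case (1) for $r=7$. For $a_{5}\neq0$, combining $\mathbf{HC}$ values for suitable $k$ should produce a factorization forcing $a_{3}\in\{0,a_{5}^{3}\}$; when $a_{3}=a_{5}^{3}$, a perfect-square (or perfect-cube) factorization of some $\mathbf{HC}$ value should leave $f$ with both $0$ and $a_{5}$ among its roots, hence $f$ is not a PP, and when $a_{3}=0$, successively solving low-$k$ identities for $a_{1}$ and then $a_{4}$ should reduce $a_{5}$ to a root of an explicit low-degree polynomial (plausibly the Conway polynomial of $\mathbb{F}_{256}$), after which a single evaluation of $\mathbf{HC}$ modulo $a_{2}^{256}-a_{2}$ in $\mathbb{F}_{256}[a_{2}]$ yields a nonzero constant, a contradiction.

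The branch $(a_{7},a_{6})=(1,0)$ is where I expect the real work. The strategy is to use a handful of $\mathbf{HC}$ equations for well-chosen $k$ to solve for $a_{1}$, $a_{2}$, and $a_{4}$ as explicit polynomial expressions in $a_{3}$ and $a_{5}$, leaving a system of (say three) polynomial equations in the two unknowns $(a_{3},a_{5})\in\mathbb{F}_{256}\times\mathbb{F}_{256}^{*}$, which is then shown to have no common solution by a direct loop in SageMath over the $256\cdot255$ candidate pairs. The main obstacle is computational rather than conceptual: for $q=256$ Hermite's criterion ranges over $k$ up to $254$, and each call $\mathbf{HC}(8,k,\dots)$ loops over $8^{n}$ tuples with $n=|\beta(k)|\leqslant 7$, so the symbolic calls over $\mathbb{F}_{2}[a_{1},\dots,a_{7}]$ are markedly heavier than for $r\leqslant 7$; the choice of the $k$'s and of the order in which coefficients are eliminated must be made carefully to keep the intermediate polynomials manageable, and the concluding exhaustive pair-search, while routine, is the most time-consuming single step. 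Once a contradiction is reached in each branch, no non-exceptional PP of degree $8$ over $\mathbb{F}_{256}$ exists, so every such PP is exceptional.
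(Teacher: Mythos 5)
Your overall strategy --- reduction to absurdity, normalization via Proposition \ref{prop:Req}, a case split on $(a_{7},a_{6})\in\{(0,0),(0,1),(1,0)\}$, and elimination of coefficients through Hermite's criterion --- is exactly the paper's. The difficulty is that what you have written is a plan, not a proof: essentially every decisive step is phrased as ``should express'', ``should produce a factorization'', ``plausibly the Conway polynomial''. In a proof of this kind the entire mathematical content \emph{is} the specific identities $\mathbf{HC}(8,k,\dots)=\cdots$ for concrete $k$ and the deductions drawn from them; until those are computed and exhibited, nothing has been established. For the record, the paper kills the $(0,0)$ branch exactly as you predict, with $\mathbf{HC}(8,51,0,0,a_{5},\dots)=a_{5}^{51}$ and $\mathbf{HC}(8,55,0,0,0,\dots)=a_{3}^{37}$, so that part of your guess is on target.

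Where your predicted structure diverges from what the computations actually give: in the branch $(a_{7},a_{6})=(0,1)$ one finds $\mathbf{HC}(8,43,\dots)=a_{5}^{3}+a_{3}$ and $\mathbf{HC}(8,85,\dots)=a_{3}^{85}+1$, which force $a_{3}=a_{5}^{3}\neq0$ outright; your planned subcases $a_{5}=0$ and $a_{3}=0$ (with the $f(t)=f(t+1)$ family and the symbolic computation modulo $a_{2}^{256}-a_{2}$, modeled on $r=7$) are vacuous here, and the branch ends with the single observation $f(a_{5})=0=f(0)$. In the branch $(a_{7},a_{6})=(1,0)$, where you expect ``the real work'' and an exhaustive search over $256\cdot255$ pairs $(a_{3},a_{5})$, the identities are in fact strong enough to avoid any search: $\mathbf{HC}(8,37,\dots)=a_{3}$ gives $a_{3}=0$ immediately, and a short chain ($k=53,43,45,39$) yields $a_{1}=0$, $a_{5}=a_{2}^{96}$, $a_{4}=a_{2}^{144}$ and $a_{2}^{382}=1$, whence $a_{2}=1$ since $\gcd(382,255)=1$; the contradiction is then the single nonzero value $\mathbf{HC}(8,55,1,0,1,1,0,1,0)=1$. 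None of these discrepancies makes your route unworkable, but they underline that the proof cannot be written down by analogy with $r=7$: you must actually run Algorithm \ref{alg:HC} for the relevant $k$ and record the resulting identities before any of your ``should'' statements becomes a theorem.
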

\begin{proof}
Let us prove it by reduction to absurdity. Suppose that $f$ is a
non-exceptional PP of degree $8$ over $\mathbb{F}_{256}$. Without
loss of generality, we can assume that $f(x)=x^{8}+\sum_{i=1}^{7}a_{i}x^{i}$
with all $a_{i}\in\mathbb{F}_{256}$ satisfying requirements $(\mathbf{R1})$
of Proposition \ref{prop:Req}. As we mentioned in the introduction
section, a linearized PP must be exceptional. In particular, we note
that: \begin{itemize} 

\item $(a_{7},a_{6},a_{5},a_{3})\neq(0,0,0,0)$;

\item $(a_{7},a_{6})\in\{(1,0),(0,1),(0,0)\}$.

\end{itemize}

Note that  
\begin{alignat*}{2}
 & \mathbf{HC}(8,51,0,0,a_{5},a_{4},a_{3},a_{2},a_{1}) &  & =a_{5}^{51},\\
 & \mathbf{HC}(8,55,0,0,0,a_{4},a_{3},a_{2},a_{1}) &  & =a_{3}^{37}.
\end{alignat*}
If $a_{7}=a_{6}=0$, then $a_{5}=a_{3}=0$. So $(a_{7},a_{6})\neq(0,0)$.
Thus $(a_{7},a_{6})=(0,1)$ or $(1,0)$.

(1) Suppose $(a_{7},a_{6})=(0,1)$. Note that 
\begin{align*}
0=\mathbf{HC}(8,43,0,1,a_{5},a_{4},a_{3},a_{2},a_{1}) & =a_{5}^{3}+a_{3},\\
0=\mathbf{HC}(8,85,0,1,a_{5},a_{4},a_{3},a_{2},a_{1}) & =a_{3}^{85}+1.
\end{align*}
So $a_{3}=a_{5}^{3}\neq0$. Also note that 
\begin{align*}
0 & =\mathbf{HC}(8,47,0,1,a_{5},a_{4},a_{5}^{3},a_{2},a_{1})^{4}\cdot a_{5}^{3}+\mathbf{HC}(8,61,0,1,a_{5},a_{4},a_{5}^{3},a_{2},a_{1})\\
 & =a_{5}^{191}+a_{5}^{175}+a_{4}^{8}a_{5}^{159}+a_{2}^{8}a_{5}^{143}+a_{1}^{8}a_{5}^{135}\\
 & =a_{5}^{135}(a_{5}^{7}+a_{5}^{5}+a_{4}a_{5}^{3}+a_{2}a_{5}+a_{1})^{8}.
\end{align*}
So $a_{1}=a_{5}^{7}+a_{5}^{5}+a_{4}a_{5}^{3}+a_{2}a_{5}$, and 
\[
f(x)=x^{8}+x^{6}+a_{5}x^{5}+a_{4}x^{4}+a_{5}^{3}x^{3}+a_{2}x^{2}+(a_{5}^{7}+a_{5}^{5}+a_{4}a_{5}^{3}+a_{2}a_{5})x.
\]
Note that $f(a_{5})=0=f(0)$ with $a_{5}\neq0$. So $f$ is not a
PP over $\mathbb{F}_{256}$.

(2) Suppose $(a_{7},a_{6})=(1,0)$. Then $\mathbf{HC}(8,37,1,0,a_{5},a_{4},a_{3},a_{2},a_{1})=a_{3}=0$.
Note that 
\begin{align*}
0=\mathbf{HC}(8,53,1,0,a_{5},a_{4},0,a_{2},a_{1}) & =a_{2}^{4}a_{5}^{48}+a_{2}^{4}a_{4}^{32}+a_{1}^{16}a_{2}^{4}\\
 & =a_{2}^{4}(a_{5}^{3}+a_{4}^{2}+a_{1})^{16}.
\end{align*}
Therefore, either $a_{2}=0$ or $a_{1}=a_{5}^{3}+a_{4}^{2}$. Also
note that 
\begin{align*}
0=\mathbf{HC}(8,43,1,0,a_{5},a_{4},0,a_{2},a_{1}) & =a_{2}^{8}a_{5}^{3}+a_{2}^{8}a_{4}^{2}+a_{1}a_{2}^{8}+a_{1}^{8}\\
 & =a_{2}^{8}(a_{5}^{3}+a_{4}^{2}+a_{1})+a_{1}^{8}=a_{1}^{8}.
\end{align*}
So $a_{1}=0$. Then  
\[
0=\mathbf{HC}(8,45,1,0,a_{5},a_{4},0,a_{2},0)=a_{5}^{32}+a_{2}^{12},
\]
and thus $a_{5}=(a_{5}^{32})^{8}=a_{2}^{96}$. Further, if $a_{2}=0$,
then $a_{5}=a_{2}^{96}=0$, and $a_{4}=0$ as 
\[
\mathbf{HC}(8,39,1,0,0,a_{4},0,0,0)=a_{4}^{6},
\]
 while $f(x)=x^{8}+x^{7}$ is not a PP. Therefore, $a_{2}\neq0$ in
this case. So $a_{4}^{2}=a_{5}^{3}=(a_{2}^{96})^{3}=a_{2}^{288}$,
and $a_{4}=a_{2}^{144}$. As 
\begin{align*}
0=\mathbf{HC}(8,39,1,0,a_{2}^{96},a_{2}^{144},0,a_{2},0) & =a_{2}^{386}+a_{2}^{4}.
\end{align*}
we have $a_{2}^{382}=1$, and $a_{2}=1$ as $\mathrm{gcd}(382,255)=1$.
Then $a_{5}=a_{4}=a_{2}=1$, $a_{3}=a_{1}=0$, and 
\[
\mathbf{HC}(8,55,1,0,1,1,0,1,0)=1\neq0\text{ in }\mathbb{F}_{256}.
\]
This makes a contradiction.
\end{proof}

\subsection{Case $r=9$, $q=512$}
\begin{thm}
All PPs of degree $8$ over $\mathbb{F}_{512}$ are exceptional.\end{thm}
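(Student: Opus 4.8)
The plan is to argue by contradiction, following the same template as the cases $r=7$ and $r=8$. Suppose $f$ is a non-exceptional PP of degree $8$ over $\mathbb{F}_{512}$. By Proposition~\ref{prop:Req} we may assume $f(x)=x^{8}+\sum_{i=1}^{7}a_{i}x^{i}$ with all $a_{i}\in\mathbb{F}_{512}$ satisfying $(\mathbf{R1})$ (and $(\mathbf{R2})$ where convenient), so that $(a_{7},a_{6})\in\{(1,0),(0,1),(0,0)\}$, and since a linearized PP is exceptional we have $(a_{7},a_{6},a_{5},a_{3})\neq(0,0,0,0)$. First I would eliminate the case $a_{7}=a_{6}=0$: running Algorithm~\ref{alg:HC} for suitable exponents $k\leqslant 510$ should produce equations of the shape $\mathbf{HC}(9,k,0,0,a_{5},a_{4},a_{3},a_{2},a_{1})=a_{5}^{N}$ and $\mathbf{HC}(9,k',0,0,0,a_{4},a_{3},a_{2},a_{1})=a_{3}^{N'}$, forcing $a_{5}=a_{3}=0$ and contradicting $(a_{7},a_{6},a_{5},a_{3})\neq(0,0,0,0)$. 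Hence $(a_{7},a_{6})=(0,1)$ or $(1,0)$, and the argument splits into these two branches.

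For the branch $(a_{7},a_{6})=(0,1)$ I expect the mechanism of the $r=7,8$ cases to recur: a $\mathbb{F}_{2}$-linear combination of two Hermite equations (something like $\mathbf{HC}(9,k,\dots)^{4}\cdot a_{3}+\mathbf{HC}(9,k',\dots)$) should factor as $a_{3}a_{5}^{M}(a_{5}^{3}+a_{3})^{4}$ or similar, so that either $a_{5}=0$ or $a_{3}\in\{0,a_{5}^{3}\}$. When $a_{3}=a_{5}^{3}\neq0$, one further Hermite equation should factor as $a_{5}(a_{5}^{7}+a_{5}^{5}+a_{4}a_{5}^{3}+a_{2}a_{5}+a_{1})^{2}$ (say), solving $a_{1}$ linearly in the other coefficients and making $f(0)=f(a_{5})=0$ with $a_{5}\neq0$, a contradiction to injectivity. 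The residual subcases ($a_{5}=0$, or $a_{5}\neq0$ with $a_{3}=0$) should collapse the same way as in the $r=7$ proof: repeated use of $\mathbf{HC}(9,k,\dots)=0$ pins down $a_{1},a_{2},a_{4}$ in terms of $a_{5}$ (and possibly forces $a_{5}$ to be a root of a low-degree linearized polynomial), after which a short SageMath loop over the remaining one or two free parameters in $\mathbb{F}_{512}$ shows the system is inconsistent with a further $\mathbf{HC}$ value.

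For the branch $(a_{7},a_{6})=(1,0)$ the analogous step is to use the low-exponent Hermite equations to express $a_{1}$, $a_{2}$ (and possibly to force $a_{3}=0$) in terms of $a_{4},a_{5}$, then to use one more equation to cut $a_{4}$ down so that only $(a_{3},a_{5})\in\mathbb{F}_{512}\times\mathbb{F}_{512}^{*}$ (or even just $a_{5}$) remains free; substituting these expressions into two or three further $\mathbf{HC}(9,k,\dots)$ should yield a system with no common solution, which one verifies by an explicit brute-force search in SageMath~8.6 over the at most $512^{2}$ candidate pairs. I expect the main obstacle to be organizational rather than conceptual: since $q-1=511=7\cdot 73$ is larger, the $\mathbf{HC}$ polynomials carry higher exponents and one must try more values of $k$ before landing on a usefully factorable identity, and every subcase of $(a_{7},a_{6})=(0,1)$ and $(1,0)$ must be either killed algebraically (typically by exhibiting $f(0)=f(a_{5})$ or $f(t)=f(t+1)$) or reduced to a finite, explicitly coded search — with the SageMath computations (the factorizations of the $\mathbf{HC}$ outputs and the final exhaustive loops) doing the real work and being the only part that cannot be shortened by hand.
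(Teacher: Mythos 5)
Your overall template is the right one and you correctly anticipate the two decisive mechanisms in the main branch: for $(a_{7},a_{6})=(0,1)$ with $a_{5}\neq 0$, a combination of Hermite equations does factor so as to force $a_{3}=a_{5}^{3}$, and a further one factors as $a_{5}(a_{5}^{7}+a_{5}^{5}+a_{4}a_{5}^{3}+a_{2}a_{5}+a_{1})^{2}$, yielding $f(a_{5})=f(0)=0$; and the subcase $(a_{7},a_{6},a_{5})=(0,1,0)$ dies exactly via the $f(t+1)=f(t)$ trick you describe. But two of your structural predictions do not survive contact with the actual computation. First, you plan to keep the branch $(a_{7},a_{6})=(1,0)$ alive and finish it with a brute-force search over up to $512^{2}$ pairs; in fact $\mathbf{HC}(9,73,a_{7},\dots)=a_{7}^{73}$, so $a_{7}=0$ from the start and that entire branch (and with it every exhaustive SageMath loop — the $r=9$ case is purely algebraic, unlike $r=7$) is vacuous. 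That is only wasted effort, not an error.

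The genuine gap is in your treatment of $a_{7}=a_{6}=0$. You predict an identity $\mathbf{HC}(9,k,0,0,a_{5},a_{4},a_{3},a_{2},a_{1})=a_{5}^{N}$ forcing $a_{5}=0$, by analogy with $\mathbf{HC}(7,23,\dots)=a_{5}^{19}$ and $\mathbf{HC}(8,51,\dots)=a_{5}^{51}$. No such clean identity is available for $r=9$: one only gets $\mathbf{HC}(9,117,0,0,0,a_{4},a_{3},a_{2},a_{1})=a_{3}^{85}$, which kills $a_{3}$ \emph{after} assuming $a_{5}=0$. The subcase $a_{7}=a_{6}=0\neq a_{5}$ therefore survives and must be handled separately: one invokes requirement $(\mathbf{R3})$ of Proposition \ref{prop:Req} to normalize to $(a_{5},a_{4})=(1,0)$, then uses $\mathbf{HC}(9,93,\cdot)=a_{3}$, $\mathbf{HC}(9,103,\cdot)=a_{1}$, $\mathbf{HC}(9,107,\cdot)=a_{2}^{8}$ to reduce to $f(x)=x^{8}+x^{5}$, which satisfies $f(1)=0=f(0)$. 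Your plan as written would simply get stuck at this point, since the equation you are counting on to eliminate $a_{5}$ does not exist; note also that your proposal only commits to $(\mathbf{R1})$ and $(\mathbf{R2})$, whereas $(\mathbf{R3})$ is exactly the normalization needed to make this residual subcase finite and trivial.
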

\begin{proof}
Let us prove it by reduction to absurdity. Suppose that $f$ is a
non-exceptional PP of degree $8$ over $\mathbb{F}_{512}$. Without
loss of generality, we can assume that $f(x)=x^{8}+\sum_{i=1}^{7}a_{i}x^{i}$
with all $a_{i}\in\mathbb{F}_{512}$ satisfying requirements $(\mathbf{R1})$,
$(\mathbf{R2})$ and $(\mathbf{R3})$ of Proposition \ref{prop:Req}.
As we mentioned in the introduction section, a linearized PP must
be exceptional. In particular, we note that: \begin{itemize} 

\item $(a_{7},a_{6},a_{5},a_{3})\neq(0,0,0,0)$;

\item $(a_{7},a_{6})\in\{(1,0),(0,1),(0,0)\}$;

\item if $(a_{7},a_{6},a_{5})=(0,1,0)$, then $a_{4}=0$;

\item if $a_{7}=a_{6}=0\neq a_{5}$, then $(a_{5},a_{4})=(1,0)$.

\end{itemize}

As $\mathbf{HC}(9,73,a_{7},a_{6},a_{5},a_{4},a_{3},a_{2},a_{1})=a_{7}^{73}=0$,
$(a_{7},a_{6})=(0,0)$ or $(0,1)$. Note that 
\[
\mathbf{HC}(9,117,0,0,0,a_{4},a_{3},a_{2},a_{1})=a_{3}^{85}.
\]
If $a_{6}=a_{5}=0$, then $a_{3}=0$. Recall that $(a_{7},a_{6},a_{5},a_{3})\neq(0,0,0,0)$,
so $(a_{6},a_{5})\neq(0,0)$.

(1) Suppose $a_{7}=a_{6}=0\neq a_{5}$. Then $(a_{5},a_{4})=(1,0)$.
Note that 
\begin{alignat*}{2}
 & \mathbf{HC}(9,93,0,0,1,0,a_{3},a_{2},a_{1}) &  & =a_{3},\\
 & \mathbf{HC}(9,103,0,0,1,0,0,a_{2},a_{1}) &  & =a_{1},\\
 & \mathbf{HC}(9,107,0,0,1,0,0,a_{2},0) &  & =a_{2}^{8}.
\end{alignat*}
So $a_{3}=a_{1}=a_{2}=0$, and $f(x)=x^{8}+x^{5}$ is not a PP over
$\mathbb{F}_{512}$.

(2) Suppose $(a_{7},a_{6},a_{5})=(0,1,0)$. Then $a_{4}=0$. Note
that 
\begin{alignat*}{2}
 & \mathbf{HC}(9,171,0,1,0,0,0,a_{2},a_{1}) &  & =a_{2},\\
 & \mathbf{HC}(9,183,0,1,0,0,0,0,a_{1}) &  & =a_{1}^{16}.
\end{alignat*}
If $a_{3}=0$, then $a_{2}=a_{1}=0$ and $f(x)=x^{8}+x^{6}$ is not
a PP. So $a_{3}\neq0$. Note that 
\begin{align*}
0 & =\mathbf{HC}(9,87,0,1,0,0,a_{3},a_{2},a_{1})=a_{3}^{5}+a_{2}^{2}a_{3}+a_{1}a_{3}^{2},\\
0 & =\mathbf{HC}(9,91,0,1,0,0,a_{3},a_{2},a_{1})=a_{3}^{17}+a_{2}^{2}a_{3}^{9}+a_{1}a_{3}^{10}+a_{2}^{8}a_{3},\\
0 & =\mathbf{HC}(9,87,0,1,0,0,a_{3},a_{2},a_{1})\cdot a_{3}^{8}+\mathbf{HC}(9,91,0,1,0,0,a_{3},a_{2},a_{1})\\
 & =a_{3}^{17}+a_{3}^{13}+a_{2}^{8}a_{3}=a_{3}(a_{3}^{4}+a_{3}^{3}+a_{2}^{2})^{4}.
\end{align*}
Let $a_{3}=t^{2}$ with $t\in\mathbb{F}_{512}^{*}$. Then $a_{2}=t^{4}+t^{3}$,
$a_{1}=a_{3}^{3}+a_{2}^{2}a_{3}^{-1}=t^{4}$, and 
\[
f(x)=x^{8}+x^{6}+t^{2}x^{3}+(t^{4}+t^{3})x^{2}+t^{4}x.
\]
Note that $f(t+1)=t^{8}+t^{5}=f(t)$. So $f$ is not a PP over $\mathbb{F}_{512}$.

(3) Suppose $(a_{7},a_{6})=(0,1)$ and $a_{5}\neq0$. Note that 
\begin{align*}
0 & =\mathbf{HC}(9,87,0,1,a_{5},a_{4},a_{3},a_{2},a_{1})\cdot(a_{5}^{96}+a_{3}^{32})+\mathbf{HC}(9,103,0,1,a_{5},a_{4},a_{3},a_{2},a_{1})\\
 & =a_{5}^{115}+a_{3}a_{5}^{112}+a_{3}^{32}a_{5}^{19}+a_{3}^{33}a_{5}^{16}=a_{5}^{16}(a_{5}^{3}+a_{3})^{33}.
\end{align*}
So $a_{3}=a_{5}^{3}$. Also note that  
\begin{align*}
0 & =\mathbf{HC}(9,87,0,1,a_{5},a_{4},a_{5}^{3},a_{2},a_{1})\\
 & =a_{5}^{15}+a_{5}^{11}+a_{4}^{2}a_{5}^{7}+a_{2}^{2}a_{5}^{3}+a_{1}^{2}a_{5}\\
 & =a_{5}(a_{5}^{7}+a_{5}^{5}+a_{4}a_{5}^{3}+a_{2}a_{5}+a_{1})^{2}.
\end{align*}
So $a_{1}=a_{5}^{7}+a_{5}^{5}+a_{4}a_{5}^{3}+a_{2}a_{5}$, and 
\[
f(x)=x^{8}+x^{6}+a_{5}x^{5}+a_{4}x^{4}+a_{5}^{3}x^{3}+a_{2}x^{2}+(a_{5}^{7}+a_{5}^{5}+a_{4}a_{5}^{3}+a_{2}a_{5})x.
\]
Note that $f(a_{5})=0=f(0)$ with $a_{5}\neq0$. So $f$ is not a
PP over $\mathbb{F}_{512}$.
\end{proof}

\noindent \textbf{Acknowledgements.} This work was supported by the
Natural Science Foundation of Guangdong Province {[}No. 2018A030310080{]}.
The author was also sponsored by the National Natural Science Foundation
of China {[}No. 11801579{]}. Special thanks go to my lovely newborn
daughter, without whose birth should this paper have come out much
earlier.

\end{document}